\renewcommand{\thefootnote}{} 
\theoremstyle{plain} 
\newtheorem{theorem}{\indent\sc Theorem}[section]
\newtheorem{lemma}[theorem]{\indent\sc Lemma}
\newtheorem{corollary}[theorem]{\indent\sc Corollary}
\newtheorem{proposition}[theorem]{\indent\sc Proposition}
\theoremstyle{definition} 
\newtheorem{definition}[theorem]{\indent\sc Definition}
\newtheorem{remark}[theorem]{\indent\sc Remark}
\newtheorem{example}[theorem]{\indent\sc Example}
\newtheorem{question}[theorem]{\indent\sc Question}
\newcommand{\cF}{\mathcal{F}}
\newcommand{\cL}{\mathcal{L}}
\newcommand{\cT}{\mathcal{T}}
\newcommand{\cB}{\mathcal{B}}
\newcommand{\bR}{\mathbb{R}}
\newcommand{\bZ}{\mathbb{Z}}
\newcommand{\qz}{\underbrace{0\cdots0}_{q}}
\newcommand{\Stab}{\operatorname{Stab}}
\newcommand{\oF}{\overrightarrow{F}}
\newcommand\Z[1]{\mathbb{Z} / #1\mathbb{Z}}
\newcommand\cinput[2]{\lower#1pt\hbox{\input{#2}}}
\subjclass[2020]{Primary 20F65; Secondary 57K10}
\begin{document}
\keywords{Thompson's group, $p$-colorability}

\title{The $p$-colorable subgroup of Thompson's group}
\author{Yuya Kodama and Akihiro Takano}
\date{}
\renewcommand{\thefootnote}{\arabic{footnote}}  
\setcounter{footnote}{0} 
\thanks{The first author was supported by JST, the establishment of university
fellowships towards the creation of science technology innovation, Grant Number JPMJFS2139.}

\begin{abstract}
Recently, Jones introduced a method of constructing knots and links from elements of Thompson's group $F$ by using its unitary representations.
He also defined several subgroups of $F$ as the stabilizer subgroups and some researchers studied them algebraically.
One of the subgroups is called the 3-colorable subgroup $\mathcal{F}$, and the authors proved that all knots and links obtained from non-trivial elements of $\mathcal{F}$ are 3-colorable.
In this paper, for any odd integer $p$ greater than two, we define the $p$-colorable subgroup of $F$ whose non-trivial elements yield $p$-colorable knots and links and show it is isomorphic to the certain Brown--Thompson group.
\end{abstract}

\maketitle

\section{Introduction}
Thompson's groups were defined by Richard Thompson in 1965. 
These groups were originally used for the viewpoint of logic. 
Nowadays, they are studied in various areas including geometric group theory. 
Especially for the group $F$, despite its uncomplicated group presentation, it is a mysterious one with many open problems such as amenability. 
Also, subgroups of $F$ are still actively studied in order to understand more about ``what $F$ is'' (see for instance \cite{MR4349287, MR4319969, MR4303330, MR4241460, aiello2021maximal, MR3710646, MR3565428}). 

Recently, Jones \cite{jones2017thompson} constructed unitary representations of Thompson's groups motivated by the creation of algebraic quantum field theories on the circle. 
In this process, he introduced a method of constructing knots and links from $F$ and proved that any of them can be obtained from a certain element of $F$ \cite{jones2017thompson, jones2019thompson}. 
In keeping with Alexander's theorem of the braid group, such a theorem is called Alexander's theorem. 

In his research program, he also found two interesting subgroups of $F$. 
One of them is the group called oriented subgroup $\oF$, and knots and links obtained from $\oF$ are naturally oriented \cite{jones2017thompson}. 
Jones \cite{jones2017thompson} proved a slightly weaker version of Alexander's theorem of $\oF$ for oriented knots and links, and Aiello \cite{aiello2020alexander} proved it completely. 
Golan and Sapir \cite{golan2017jones} characterized elements of $\oF$ by the perspective of a diagram group isomorphic to $F$ and constructed a natural generalization of $\oF$. 
They also characterized $\oF$ as a stabilizer of a subset of $\mathbb{Z}[1/2]\cap (0, 1)$. 
Another group is called $3$-colorable subgroup $\cF$ \cite{jones2018nogo}. 
Aiello and Naginibeda \cite{aiello2021maximal} characterized $\cF$ as a stabilizer of a subset of $\mathbb{Z}[1/2]\cap (0, 1)$. 
The authors \cite{kodama20223} proved that all knots and links obtained from non-trivial elements of $\cF$ are $3$-colorable. 
The $3$-colorability of knots and links is one of the well-known classical invariants in knot theory. 
We remark that Alexander's theorem of $\cF$ for $3$-colorable knots and links is still open. 

One of the major open problems in ``Thompson knot theory'' is Markov's theorem, that is, to describe completely what elements of $F$ give the same knot or link. 
For this purpose, it is important to compute more invariants.  
Therefore, in this paper, we focus on the $p$-colorability, which is a generalization of the $3$-colorability. 
We will define and investigate a new group called the $p$-colorable subgroup $\cF_p$. 

This paper is organized as follows: 
in Section \ref{section_preliminary}, we first review the definitions of the three groups called Thompson's group $F$, the Brown--Thompson group $F(n)$, and the $3$-colorable subgroup $\cF$. 
We then summarize Jones' construction from $F$ and the definition of the $p$-colorability, which is an invariant of knots and links. 
In Section \ref{section_definition_cFp}, we first define the group called the $p$-colorable subgroup $\cF_p$. 
The group $\cF_3$ coincides with the $3$-colorable subgroup $\cF$. 
We then show that any knot or link obtained from a non-trivial element of $\cF_p$ is $p$-colorable. 
In Section \ref{section_property_cFp}, we study some algebraic properties of the group $\cF_p$. 
\section{Preliminaries} \label{section_preliminary}
\subsection{The Brown--Thompson group $F(n)$} \label{Brown--Thompson}
Let $n \geq 2$. 
We define the Brown--Thompson group $F(n)$ by using tree diagrams. 
See \cite{burillo2001metrics} for details. 

We first define an \textbf{$n$-caret} as a graph with $n+1$ vertices and $n$ edges, 
where one vertex (\textbf{root}) has degree $n$ and the remaining vertices have degree one. 
By attaching the root of another $n$-caret to a vertex with degree one of an $n$-caret, we obtain a new graph. 
This graph is called an \textbf{$n$-ary tree with size two}. 
In general, attaching an $n$-caret to a vertex with degree one is called an \textbf{attachment}, and a graph obtained by iterating attachments $k$ times is called an \textbf{$n$-ary tree with size $k$}. 
The $n$-ary tree of size one and an $n$-caret is the same. 
We call a vertex with degree $n$ and a vertex with degree one as a \textbf{root} and a \textbf{leaf} of an $n$-ary tree, respectively. 
Note that the number of leaves is $k(n-1)+1$. 

Let $W_n$ be the set of all finite words on $\{0, \dots, n-1\}$. 
For each $n$-ary tree $T$, there exists an injection from the set of leaves of $T$ to $W_n$ defined as follows: 
since $T$ is a tree, for each leaf, there exists a unique path from the root to the leaf. 
By labeling each edge of $n$-carets with $0, \dots, n-1$ from the left, we obtain a finite word from each path. 
We frequently identify leaves with words, and define an order $<$ of leaves by the lexicographic order of words in $W_n$. 
For an $n$-ary word, we denote its length by $\|\cdot\|$. 

Let $\cT_n$ be the set of all pairs of $n$-ary trees with the same size. 
We define an equivalence relation on $\cT_n$ as follows: 
let $(T_+, T_-)$ be in $\cT_n$. 
Since the set of leaves of $T_+$ (resp.~$T_-$) is ordered, let $i_+$ (resp.~$i_-$) be the $i$-th leaf of $T_+$ (resp.~$T_-$). 
Then we can obtain an $n$-ary tree $T_+^\prime$ (resp.~$T_-^\prime$) by an attachment to $i_+$ (resp.~$i_-$). 
We call this way as an \textbf{insertion of $n$-carets} in $(T_+, T_-)$, and its inverse as a \textbf{reduction of $n$-carets}. 
Then we define an equivalence relation $\sim$ as the one generated by all insertions and reductions. 
We say that $(T_+, T_-) \in \cT_n$ is \textbf{reduced} if we can not reduce any $n$-carets. 
For each equivalence class, there exists a unique reduced representative  (see \cite{cannon1996intro} for $n=2$). 

The \textbf{Brown--Thompson group $F(n)$} is the group $(\cT_n/{\sim}, \times)$ where $\times$ is the following product: 
let $A=(A_+, A_-)$ and $B=(B_+, B_-)$ be in $\cT_n$. 
By insertions, there exist two elements $A^\prime=(A_+^\prime, A_-^\prime)$ and $B^\prime=(B_+^\prime, B_-^\prime)$ such that $A \sim A^\prime$, $B \sim B^\prime$, and $A_-^\prime=B_+^\prime$ hold. 
Then for the equivalence classes of $A$ and $B$, their product is an equivalence class of $(A_+^\prime, B_-^\prime)$. 
The identity element of $F(n)$ is an equivalence class of $(T, T)$, where $T$ is an $n$-ary tree. 
For an equivalence class of $(T_+, T_-) \in \cT_n$, its inverse element is the equivalence class of $(T_-, T_+)$. 

If $n=2$, then the group $F(2)$ is also called \textbf{Thompson's group $F$}, and a $2$-caret is also called a \textbf{caret}. 

There exist two well-known presentations for $F(n)$ \cite{brin1998automorphisms, guba1997diagram}: 
\begin{align*}
F(n) &\cong \langle x_0, x_1, x_2, \dots  \mid \text{$x_i^{-1} x_j x_i=x_{j+n-1}$ ($i<j$)}  \rangle \\
&\cong \left\langle x_0, x_1, \dots, x_{n-1} \;\middle|\;
\begin{array}{l} \text{${x_k}^{x_0}={x_k}^{x_i}$ ($1 \leq i<k\leq n-1$)}, \\
\text{${x_k}^{x_0x_0}={x_k}^{x_0x_i}$ ($1 \leq i, k\leq n-1$ and $k-1\leq i$)}, \\
{x_1}^{x_0x_0x_0}={x_1}^{x_0 x_0 x_{n-1}}
\end{array}
\right\rangle, 
\end{align*}
where $x^y$ denotes $y^{-1}x y$. 
Figure \ref{generator_F(n)} is the list of the set $\{x_0, x_1, \dots\}$. 
\begin{figure}[tbp]
\begin{center}
\includegraphics[height=150pt]{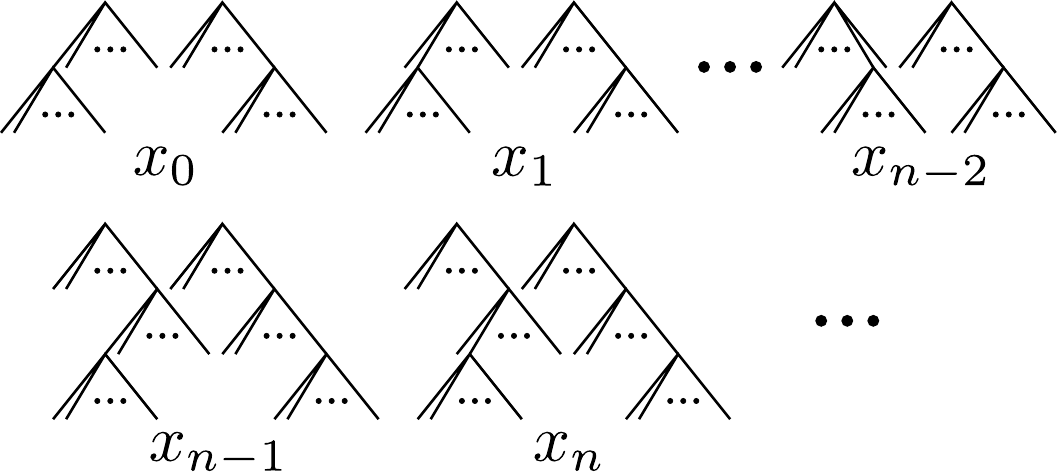}
\end{center}
\caption{The generating set of $F(n)$}
\label{generator_F(n)}
\end{figure}

The Brown--Thompson group $F(n)$ is also defined as a group of homeomorphisms on the closed interval $[0, 1]$. 
Since we only use this definition for the case $F$, we give the correspondence in that case. 
Define
\begin{align*}
f_0(x)&= \left \{
\begin{array}{cc}
2x & \mbox{\rm{if} $0 \leq x \leq \frac{1}{4}$} \\
x+\frac{1}{4} & \mbox{\rm{if} $\frac{1}{4} \leq x \leq \frac{1}{2}$} \\
\frac{x+1}{2} & \mbox{\rm{if} $\frac{1}{2} \leq x \leq 1$},
\end{array}
\right. 
&
f_1(x) &= \left \{
\begin{array}{cc}
x & \mbox{\rm{if} $0 \leq x \leq \frac{1}{2}$} \\
2x-\frac{1}{2} & \mbox{\rm{if} $\frac{1}{2} \leq x \leq \frac{5}{8}$} \\
x+\frac{1}{8} & \mbox{\rm{if} $\frac{5}{8} \leq x \leq \frac{3}{4}$} \\
\frac{x+1}{2} & \mbox{\rm{if} $\frac{3}{4} \leq x \leq 1$}. 
\end{array}
\right. 
\end{align*}
Then the group generated by $f_0$ and $f_1$ is isomorphic to $F$. 
The isomorphism is given by $x_0 \mapsto f_0, x_1 \mapsto f_1$. 
We remark that each element of $F$ gives two divisions of the closed interval $[0, 1]$. 
Each leaf $a_1 \cdots a_m$ corresponds to the leftmost element of the subinterval by the map $a_1 \cdots a_m \mapsto \Sigma_{i=1}^m {{a_i}/{2^i}}$. 
We write this map as $\rho$. 
See \cite{cannon1996intro} for details. 

In this paper, we mainly deal with $F$ and the Brown--Thompson groups in the case of $n=2^q$. 
We define embeddings from $F(2^q)$ into $F$. 
Let $T_q$ be a binary tree corresponding to a set $\{a_1 \cdots a_q \mid a_i \in \{0, 1\}\} \subset W_2$. 
See Figure \ref{T_q} for example. 
We define $\phi_q$ to be a map that replaces all $2^q$-carets of each element by $T_q$. 
See also \cite[example (1) in Section 3]{burillo2001metrics}. 
\begin{figure}[tbp]
\begin{center}
\includegraphics[height=70pt]{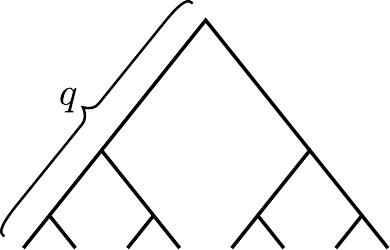}
\end{center}
\caption{A binary tree $T_q$ ($q=3$).}
\label{T_q}
\end{figure}

\subsection{The 3-colorable subgroup $\cF$} \label{3-colsub}
In this section, we summarize the definition of the $3$-colorable subgroup $\cF$ of Thompson's group $F$. 
See \cite{aiello2022thompson, kodama20223} for details. 

Let $(T_+, T_-)$ be in $\cT_2$. 
We assume that $T_+$ is descending tree with the root on the top, and $T_-$ is ascending tree with the root on the bottom. 
Since $T_+$ and $T_-$ have the same size, we attach their leaves in lexicographic order. 
The obtained graph is called \textbf{tree diagram}. 
See Figure \ref{tree_diagram} for example. 
\begin{figure}[tbp]
\begin{center}
\includegraphics[height=70pt]{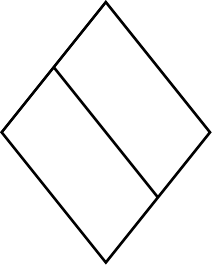}
\end{center}
\caption{A tree diagram of the generator $x_0$ in $F$. }
\label{tree_diagram}
\end{figure}

We put a tree diagram to $\mathbb{R}^2$ so that the root of $T_+$ is on $(0, 1) \in \mathbb{R}^2$, the root of $T_-$ is on $(0, -1) \in \mathbb{R}^2$, and the leaves are on $\mathbb{R}\times \{0\}$. 
Then we add two short edges, one from $(0, 1)$ to $(0, 2)$ and the other from $(0, -1)$ to $(0, -2)$. 
This gives a partition (called a \textbf{strip}) of $\mathbb{R} \times [-2, 2]$. 
A tree diagram is \textbf{$3$-strip-colorable} if we can assign a color $0$, $1$, or  $2$ to each partitioned region of the strip so that if two regions have a common edge, then they have different colors. 
By convention, if a tree diagram is $3$-strip-colorable, then we assign $0$ (resp.~$1$) to the infinite region in $(-\infty, 0] \times [-2, 2]$ (resp.~$[0, \infty) \times [-2, 2]$). 
This convention implies that the coloring of the region is unique. 
See Figure \ref{3_strip_coloring} for example. 
\begin{figure}[tbp]
\begin{center}
\includegraphics[height=120pt]{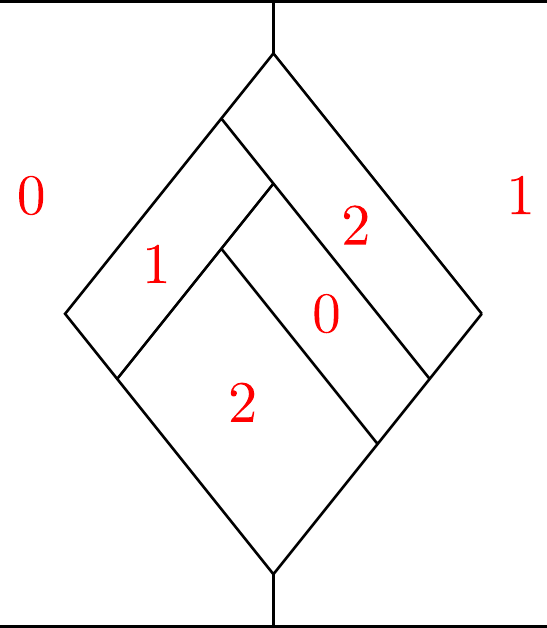}
\end{center}
\caption{A $3$-strip-colorable tree diagram}
\label{3_strip_coloring}
\end{figure}

We note that the $3$-strip-colorability is well-defined on $\cT_2/{\sim}=F$. 
Then we define the \textbf{$3$-colorable subgroup} as
\begin{align*}
\cF=\{(T_+, T_-) \in F \mid \text{$(T_+, T_-)$ is $3$-strip-colorable}\}. 
\end{align*}
The group $\cF$ and the Brown--Thompson group $F(4)$ have a close relationship. 
\begin{theorem}[{\cite[Theorem 1.3]{ren2018skein}}]
The group $\cF$ is isomorphic to the Brown--Thompson group $F(4)$. 
The isomorphism map is given by $\phi_2$. 
\end{theorem}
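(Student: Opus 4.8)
The plan is to show that $\phi_2$ is an injective homomorphism $F(4)\to F$ whose image is exactly $\cF$. The homomorphism property is immediate from the description of the product in $F(4)$ by common refinements, which $\phi_2$ carries to common refinements of binary trees, and injectivity follows because $\phi_2$ is injective on single trees and $[(\phi_2A_+,\phi_2A_-)]=1$ forces $\phi_2A_+=\phi_2A_-$, hence $A_+=A_-$ (compare \cite{burillo2001metrics}). So the real content is the equality $\phi_2(F(4))=\cF$, which I would prove by two inclusions.

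\emph{The inclusion $\phi_2(F(4))\subseteq\cF$.} I would exhibit the colouring explicitly. For a tree diagram with $n$ leaves the strip is cut into exactly $n+1$ regions $R_0,\dots,R_n$, where $R_i$ meets the gap between the $i$-th and $(i+1)$-st leaves and $R_0,R_n$ are unbounded. One checks (by induction on the tree) that the caret of $T_\pm$ whose subtree carries leaves $\ell_a,\dots,\ell_b$, split after $\ell_m$, is incident to exactly $R_{a-1},R_m,R_b$, and that these adjacencies together with $R_0\sim R_n$ (from the short edges) are all of them; in particular $R_0\sim R_1\sim\cdots\sim R_n$, so the region graph is connected and a normalised colouring is unique. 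Now if $T_\pm$ is a $T_2$-tree, an induction on its block structure shows that for every caret the three indices $a-1,m,b$ form a complete residue system modulo $3$: the two subtrees of the top caret of a block have sizes $\equiv 2\pmod 3$, and those of a second-level caret have sizes $\equiv 1\pmod 3$, which in both cases spreads $\{a-1,m,b\}$ over all residues. Hence colouring $R_i$ by $i\bmod 3$ is a valid $3$-strip-colouring, and since $n\equiv 1\pmod 3$ it sends $R_0$ to $0$ and $R_n$ to $1$, as the convention demands. Any two $T_2$-trees with the same number of leaves share this colouring, so every element of $\phi_2(F(4))$ lies in $\cF$.

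\emph{The inclusion $\cF\subseteq\phi_2(F(4))$.} Let $(T_+,T_-)$ be $3$-strip-colourable, with its unique normalised colouring $\gamma$. Inserting a caret (simultaneously in $T_+$ and $T_-$) never destroys colourability --- the new region is forced to take the third colour, different from its two necessarily distinctly coloured neighbours --- so it suffices to produce, by insertions, a representative in which both trees are $T_2$-trees, i.e.\ all leaves have even depth. Equivalently, in the homeomorphism picture this says: a tree diagram is $3$-strip-colourable if and only if every leaf has the same depth parity in $T_+$ and in $T_-$, i.e.\ the associated element of $F$ has every local slope a power of $4$; and $\{f\in F:\log_2 f'\text{ is everywhere even}\}$ equals $\phi_2(F(4))$. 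This last identification is elementary: for such an $f$ the domain and range interval at each breakpoint have levels differing by an even integer, so bisecting every odd-level interval once yields compatible $4$-ary subdivisions. The substantive direction is that colourability forces the depth-parity condition; here one passes to the edge colouring induced by $\gamma$ (the colour missing along an edge), notes that it is a proper $3$-edge-colouring at each caret vertex while the two edges at a leaf receive the same colour, and uses this together with the boundary normalisation $\gamma(R_0)=0$, $\gamma(R_n)=1$ to control the root-to-leaf paths in $T_+$ and $T_-$ and conclude that these two paths have lengths of the same parity.

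The step I expect to be the obstacle is precisely this last one --- the inclusion $\cF\subseteq\phi_2(F(4))$, i.e.\ showing that $3$-strip-colourability forces the ``power of $4$'' (depth-parity) condition, equivalently that a colourable diagram can always be refined by insertions so that both trees become $T_2$-trees. Everything else --- that $\phi_2$ is an injective homomorphism, the explicit $\bmod\,3$ colouring of $T_2$-trees, the region/adjacency count, and the correspondence between tree diagrams and piecewise-linear homeomorphisms --- is routine.
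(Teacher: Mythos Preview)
The paper states this theorem as a citation to Ren and gives no proof at that point; its own argument appears only through the generalisation Theorem~\ref{Theorem_BT_cFp} ($\cF_p\cong F(2^q)$ via $\phi_q$), whose $p=3$, $q=2$ case is exactly the statement. Your overall architecture matches that argument: both reduce to the characterisation
\[
\cF=\bigl\{(T_+,T_-)\in F:\ \|i_+\|\equiv\|i_-\|\pmod 2\ \text{for all }i\bigr\},
\]
and both identify the right-hand side with $\phi_2(F(4))$ by refining to $T_2$-trees (your ``bisect every odd-level interval'' is Lemma~\ref{q_length} for $q=2$). Your explicit $i\bmod 3$ colouring for the inclusion $\phi_2(F(4))\subseteq\cF$ is the $p=3$ instance of Lemma~\ref{q_dyadic}; the paper routes this through the dyadic map $\rho$ instead of region indices, which is what lets the same proof run for every odd $p$.

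The genuine difference is at the step you flag as the obstacle, the implication ``$3$-strip-colourable $\Rightarrow$ depth parity''. The paper's device avoids edge colourings entirely: after rewriting the region colour left of leaf $w$ as $\rho(w)\bmod 3$, insert a caret at leaf $i$. The new right leaf has word $i_\pm\cdot 1$, so
\[
\rho(i_+\!\cdot 1)-\rho(i_-\!\cdot 1)=\Bigl(\rho(i_+)+2^{-\|i_+\|-1}\Bigr)-\Bigl(\rho(i_-)+2^{-\|i_-\|-1}\Bigr),
\]
and since both sides must vanish modulo $3$ one obtains $2^{\|i_+\|}\equiv 2^{\|i_-\|}\pmod 3$, hence $\|i_+\|\equiv\|i_-\|\pmod 2$. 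Your edge-colouring route can be completed too: the clean invariant is the \emph{jump} $\gamma(R_j)-\gamma(R_{j-1})$ across the leaf edge, which by the bifurcation relation $2b\equiv a+c$ is halved at each caret and hence (mod $3$) negated, so it equals $(-1)^{\text{depth}}$; equality of jumps in $T_+$ and $T_-$ then forces the parity. But the paper's insertion trick is shorter and is precisely the argument that generalises verbatim to arbitrary $p$ with $2$ replaced by the multiplicative order $q$.
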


\subsection{Jones' construction} \label{jones}
In this section, we explain the method of constructing knots and links from elements of $F$ based on \cite{jones2019thompson}.
Let $(T_+, T_-)$ be a reduced tree diagram.

We construct the plane graph $\cB(T_+, T_-)$ containing a tree diagram $(T_+, T_-)$ as a subgraph as follows:
for any region, including the unbounded one, of $(T_+, T_-)$, the subgraph bounding it contains exactly two carets, one in $T_+$ and the other in $T_-$.
Then we connect roots of such carets by an edge in the region.
Figure~\ref{step1} is an example of this construction.

\begin{figure}[tbp]
\begin{center}
\includegraphics[height=135pt]{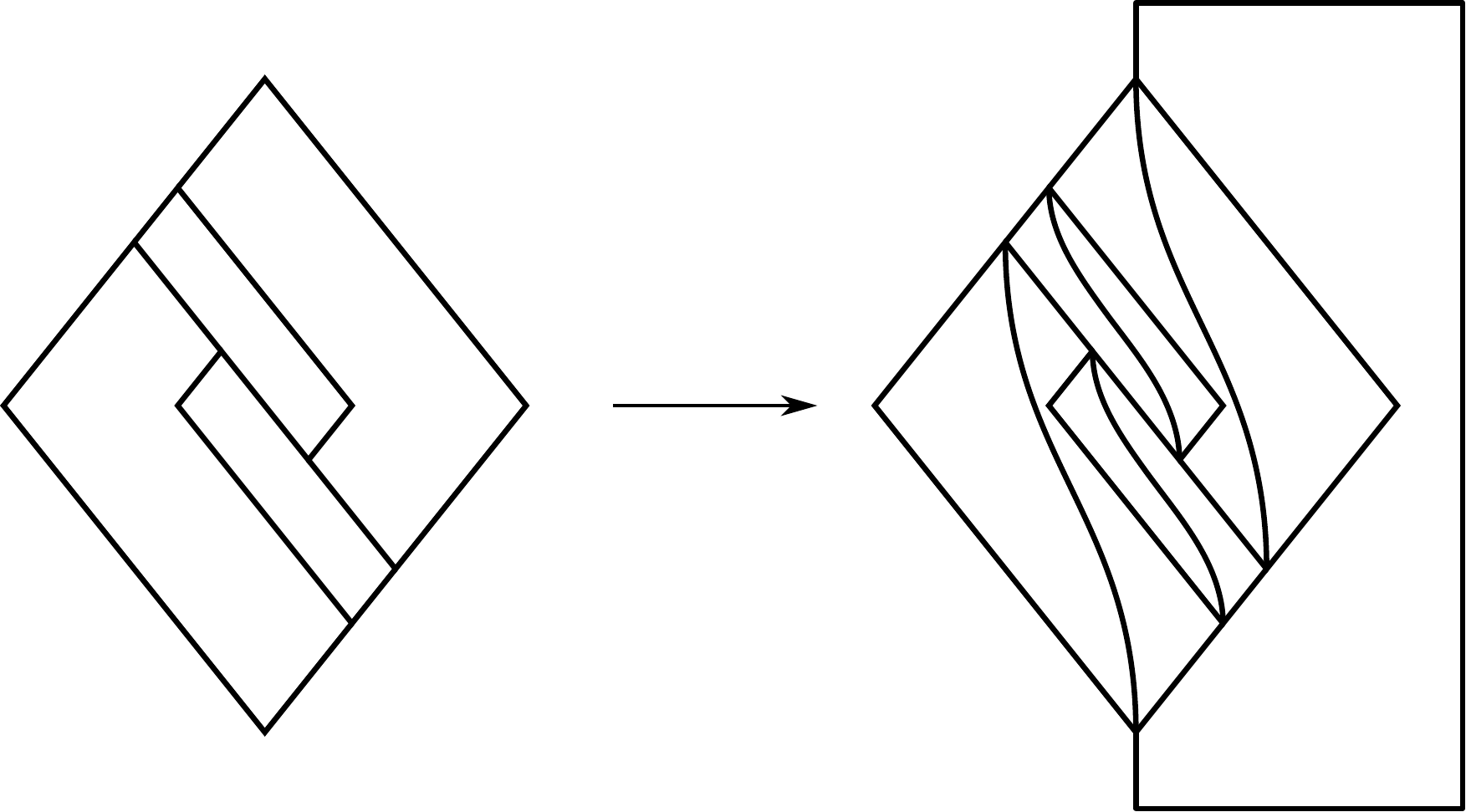}
\end{center}
\caption{The plane graph $\cB(T_+, T_-)$ associated with $(T_+, T_-)$.}
\label{step1}
\end{figure}

Since all vertices of the plane graph $\cB(T_+, T_-)$ are 4-valent, it can be regarded as a link projection.
Hence we obtain the link diagram $\cL(T_+, T_-)$ by turning each vertex into a crossing with the rules \cinput{5}{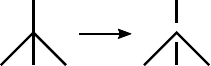_tex} or \cinput{5}{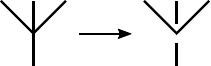_tex}.
Figure~\ref{step2} is an example of this construction.

\begin{figure}[tbp]
\begin{center}
\includegraphics[height=135pt]{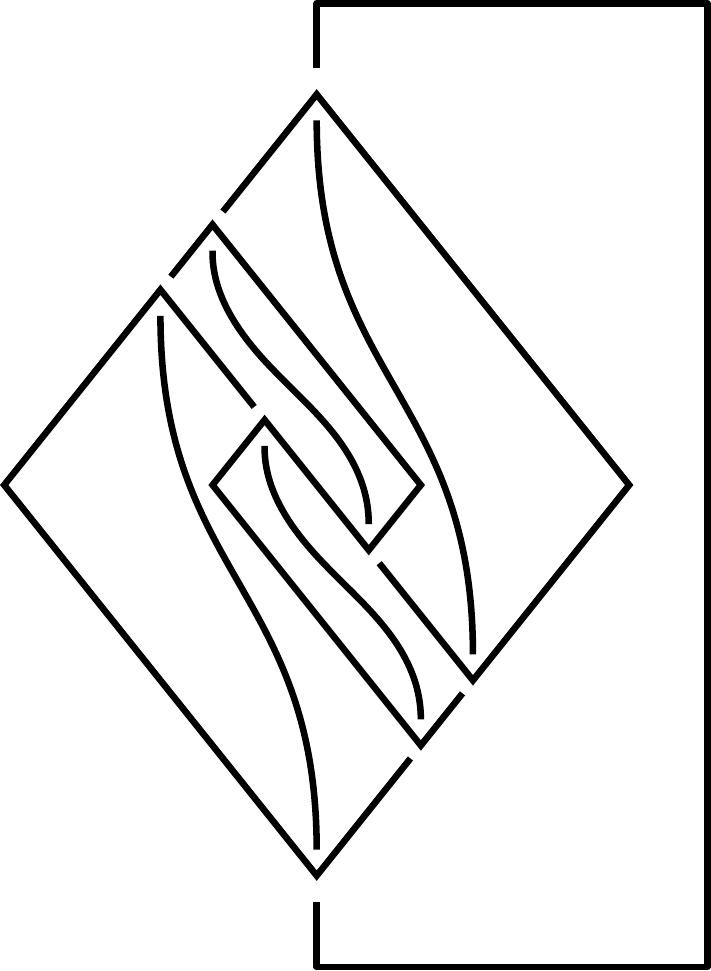}
\end{center}
\caption{The link diagram $\cL(T_+, T_-)$.}
\label{step2}
\end{figure}

\begin{remark} \label{mapL}
This construction can be applied for any non-reduced tree diagram.
However, the resulting link diagram is different from that of the equivalent reduced tree diagram:
let $(T'_+, T'_-)$ be a non-reduced tree diagram obtained by inserting carets into  a leaf of the reduced tree diagram $(T_+, T_-)$.
Then we have $\cL(T'_+, T'_-) = \cL(T_+, T_-) \sqcup \lower-1pt\hbox{$\bigcirc$}$; see Figure~\ref{caret_triv}.
On the other hand, each element $g \in F$ has the unique reduced tree diagram.
Therefore we can consider a map $\cL \colon F \to \{ $all knots and links$\}$ defined by $\cL(g)$ being the link diagram obtained from its reduced tree diagram.
Jones \cite[Theorem 5.3.1]{jones2017thompson} proved that this map is surjective, which is called Alexander's theorem.
On the other hand,  it is easy to see that this is not injective.
\end{remark}

\begin{figure}[tbp]
\begin{center}
\includegraphics[height=50pt]{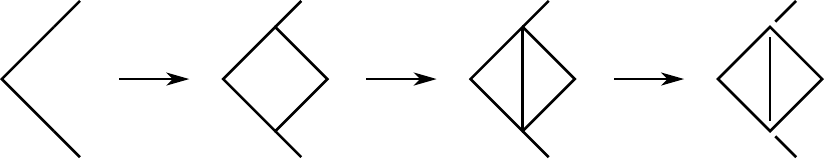}
\end{center}
\caption{A trivial link component by inserting carets.}
\label{caret_triv}
\end{figure}

\subsection{Dehn coloring}
In this section, we briefly review a link invariant obtained from a coloring of regions.
See \cite{carter2014coloring, kauffuman2006knot} for details.
Let $p \geq 3$ be an integer and $D$ be a diagram of a link $L$.

\begin{definition}
A \textbf{Dehn $p$-coloring} of $D$ is an assigning an element of $\Z{p}$ to each region of $D$ such that
\begin{itemize}
\item the unbounded region is assigned 0; and
\item it satisfies the following condition for any crossing:
\begin{align} \label{Dehn}
\cinput{25}{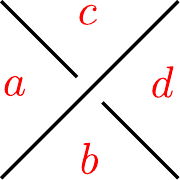_tex} \hspace{30pt}
a + b \equiv c + d \pmod p.
\end{align}
\end{itemize}
\end{definition}

Recall that every link diagram has a unique checkerboard coloring such that the unbounded region is white.
A Dehn $p$-coloring is \textbf{trivial} if every white region is labeled by 0 and every black region is labeled by a fixed element $a \in \Z{p}$.
A link diagram is \textbf{Dehn $p$-colorable} if there exists a non-trivial Dehn $p$-coloring.
It is well known that the Dehn $p$-colorability is a link invariant.
Firure \ref{Dehn-example} is an example of a non-trivial Dehn 5-coloring of the figure-eight knot $4_1$.

There exists a classical invariant similar to the Dehn coloring, which is called the Fox coloring.
Carter, Silver, and Williams \cite{carter2014coloring} proved that these colorabilities are equivalent.

\begin{figure}[tbp]
\begin{center}
\includegraphics[height=90pt]{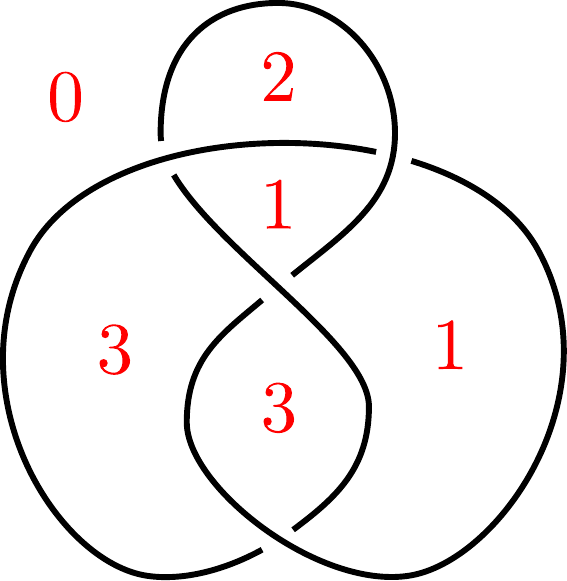}
\end{center}
\caption{A Dehn 5-coloring the figure-eight knot.}
\label{Dehn-example}
\end{figure}

\section{The $p$-colorable subgroup $\cF_p$}\label{section_definition_cFp}

\subsection{Definition}
In the rest of this paper, let $p \geq 3$ be an odd integer.
In order to define the $p$-colorable subgroup, we reinterpret the 3-colorable subgroup $\cF$ using the dyadic rationals.

Let $T$ be a binary tree.
Restricting the partition defined in Section \ref{3-colsub} to $\bR \times [0,2]$, we obtain a partition of $\bR \times [0,2]$ by $T$.
Moreover, there uniquely exists the assigning a number $0,1$, or $2$ to each partitioned region to satisfy the condition as in Section \ref{3-colsub}, where the left (resp.~right) unbounded region is assigned 0 (resp.~1).
For each leaf $i$ of $T$, let $\omega(i)$ be the number of the region to the left of $i$.
Then the following is obvious from the definition:

\begin{proposition}[{\cite[Proposition 2.1]{aiello2021maximal}}]
It holds
\begin{align*}
\cF = \left\{ (T_+, T_-) \in F \mid \forall i \geq 0, \omega(i_+) = \omega(i_-) \right\},
\end{align*}
where $i_+$ $($resp.~$i_-$$)$ is the leaf $i$ of $T_+$ $($resp.~$T_-$$)$.
\end{proposition}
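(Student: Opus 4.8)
The plan is to unwind the definition of $3$-strip-colorability together with the definition of $\omega$ and to observe that the two encode exactly the same gluing condition along the horizontal line $\bR\times\{0\}$.

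First I would fix the geometric picture. Placing a tree diagram $(T_+,T_-)$ in $\bR^2$ as in Section~\ref{3-colsub}, its strip partition of $\bR\times[-2,2]$ is the overlay of the partition of $\bR\times[0,2]$ induced by $T_+$ and the partition of $\bR\times[-2,0]$ induced by $T_-$; these two share only the (identified) leaves on $\bR\times\{0\}$. Since the edges of the diagram meet $\bR\times\{0\}$ precisely at the leaves, the open segment between two consecutive leaves — call it a \emph{gap} — as well as each of the two half-lines of $\bR\times\{0\}$ lying outside all leaves, sits in the interior of a single region of the strip partition. An easy induction on the size of the tree shows that every region of the $T_+$-partition (resp.\ $T_-$-partition) meets $\bR\times\{0\}$ in exactly one gap or half-line, each caret attachment splitting off exactly one new region adjacent to exactly one new gap; in particular, for $i\geq 1$, ``the region to the left of leaf $i$'' is well defined and coincides with ``the region to the right of leaf $i-1$''. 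Consequently every region of the strip partition is the union of one $T_+$-region and one $T_-$-region glued along a single gap or half-line, and this pairing is a bijection between the $T_+$-regions and the $T_-$-regions: the $T_+$- and $T_-$-regions bordering the gap just to the left of leaf $i$ are, by definition, the regions to the left of $i_+$ and of $i_-$, with colors $\omega(i_+)$ and $\omega(i_-)$, while the two unbounded regions carry colors $0$ and $1$ on both halves by the normalization convention.

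Next I would invoke the existence-and-uniqueness statement recalled just before the proposition: for a single binary tree there is a unique proper $\{0,1,2\}$-coloring of its partition of $\bR\times[0,2]$ sending the left (resp.\ right) unbounded region to $0$ (resp.\ $1$), and the reflected statement holds for $\bR\times[-2,0]$. A proper $3$-coloring of the whole strip with left unbounded region $0$ restricts on each half to a proper, correctly normalized coloring, hence to the unique one there; conversely, the two unique colorings of the halves assemble to a proper coloring of the whole strip precisely when they agree on each region common to both halves, i.e.\ on each gap and on each half-line. Agreement on the two half-lines is automatic ($0=0$ and $1=1$), so the assembly succeeds if and only if $\omega(i_+)=\omega(i_-)$ for every leaf $i$ (the case $i=0$ being the trivial $0=0$). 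This proves that $(T_+,T_-)$ is $3$-strip-colorable if and only if $\omega(i_+)=\omega(i_-)$ for all $i\geq 0$; since both conditions are unchanged by insertions and reductions of carets, the equivalence descends to $F$ (alternatively, one may argue with reduced representatives throughout).

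The one genuinely delicate point — the reason the authors say ``obvious'' rather than nothing at all — is the planar bookkeeping in the first step: that consecutive leaves are separated by exactly one region and that no region of a half-partition wraps around to touch $\bR\times\{0\}$ in two distinct gaps. Once this is established, the remainder is a direct comparison of two uniquely determined colorings, and I expect no further obstacle.
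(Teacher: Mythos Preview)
Your argument is correct and is exactly the natural unfolding of the definitions; the paper itself offers no proof beyond the remark ``the following is obvious from the definition,'' so there is nothing to compare beyond noting that you have made explicit the gap-by-gap gluing bookkeeping the authors left implicit.
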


Since we identified each leaf with a binary word in Section \ref{Brown--Thompson}, we obtain a map $\omega \colon W_2 \to \Z{3}$.
Aiello and Nagnibeda \cite{aiello2021maximal} proved the following:

\begin{proposition}[{\cite[Proposition 2.5]{aiello2021maximal}}]
For any binary word $a_1 \cdots a_n \in W_2$, it holds
\begin{align*}
\omega(a_1 \cdots a_n) \equiv \sum_{i=1}^{n} (-1)^i a_i \pmod 3.
\end{align*}
\end{proposition}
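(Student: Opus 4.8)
The plan is to follow the coloring along the unique path in $T$ from the root to the leaf $w = a_1 \cdots a_n$ and to reduce the claim to a short recursion in $\Z{3}$. For $0 \le k \le n$ let $v_k$ be the node of $T$ reached from the root by reading $a_1 \cdots a_k$, so $v_0$ is the root and $v_n = w$. The leaves of the subtree rooted at $v_k$ form a consecutive block in the global left-to-right order; let $\alpha_k$ (resp.\ $\beta_k$) be the color of the region of the partition lying immediately to the left of the first leaf (resp.\ immediately to the right of the last leaf) of this block. A look at the planar picture shows that these two regions are well defined and that $\alpha_0 = 0$, $\beta_0 = 1$ by the normalization of the coloring: the region flanking the leftmost root-to-leaf path of the subtree is a single region whose topmost boundary edge is the edge entering $v_k$ from above, and likewise on the right, so $\alpha_k \neq \beta_k$ because these two regions share that incoming edge. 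The middle region of the caret at $v_k$ (the one enclosed by its two descending edges) abuts both descending edges, hence carries the third color $\mu_{k+1} \equiv -(\alpha_k + \beta_k) \pmod 3$. Passing from $v_k$ to the chosen child keeps one of the two flanking regions and replaces the other by this middle region, so
\[
(\alpha_{k+1}, \beta_{k+1}) =
\begin{cases}
(\alpha_k,\ \mu_{k+1}) & \text{if } a_{k+1} = 0,\\
(\mu_{k+1},\ \beta_k) & \text{if } a_{k+1} = 1.
\end{cases}
\]
When $k = n$ the subtree is the single leaf $w$, so $\alpha_n$ is exactly the color of the region to the left of $w$; that is, $\omega(w) = \alpha_n$.

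It then remains to solve this recursion. Set $\sigma_k \equiv \sum_{i=1}^{k} (-1)^i a_i \pmod 3$. I would prove by induction on $k$ that $(\alpha_k, \beta_k) \equiv (\sigma_k,\ \sigma_k + (-1)^k) \pmod 3$; the base case $k = 0$ reads $(0,1)$. Granting the claim for $k$, we get $\mu_{k+1} \equiv -(2\sigma_k + (-1)^k) \equiv \sigma_k + (-1)^{k+1} \pmod 3$, using $-2 \equiv 1 \pmod 3$. If $a_{k+1} = 0$ then $\sigma_{k+1} = \sigma_k$ and the recursion gives $(\alpha_{k+1}, \beta_{k+1}) = (\sigma_k,\ \sigma_k + (-1)^{k+1})$, which is the asserted pair; if $a_{k+1} = 1$ then $\sigma_{k+1} = \sigma_k + (-1)^{k+1}$ and the recursion gives $(\alpha_{k+1}, \beta_{k+1}) = (\sigma_k + (-1)^{k+1},\ \sigma_k + (-1)^k)$, which equals $(\sigma_{k+1},\ \sigma_{k+1} + (-1)^{k+1})$ because $2(-1)^{k+1} \equiv (-1)^k \pmod 3$. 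Taking $k = n$ yields $\omega(a_1 \cdots a_n) = \alpha_n \equiv \sigma_n = \sum_{i=1}^{n} (-1)^i a_i \pmod 3$, as wanted.

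I expect the only genuine obstacle to be the planar bookkeeping in the first paragraph: carefully justifying, from the picture, that the two regions flanking a subtree are single regions and that they pass to the two children exactly as stated. Once this is in place, what remains is a routine induction in $\Z{3}$, driven entirely by the identity $-2 \equiv 1 \pmod 3$. One could also dispense with subtrees altogether and argue only about the carets and regions adjacent to the root-to-$w$ path, since that is all the argument uses; I would probably present it that way to keep the geometry minimal.
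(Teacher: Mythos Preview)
Your argument is correct. The recursion $(\alpha_{k+1},\beta_{k+1})$ in terms of $(\alpha_k,\beta_k)$ and the digit $a_{k+1}$ is exactly right, and the induction verifying $(\alpha_k,\beta_k)\equiv(\sigma_k,\sigma_k+(-1)^k)\pmod 3$ goes through cleanly; the identity $-2\equiv 1\pmod 3$ is indeed the only arithmetic needed. The geometric claim you flag as the ``only genuine obstacle'' is also fine: the region immediately to the left of the leftmost leaf of the subtree at $v_k$ has as its right boundary the leftmost descending path from $v_k$ together with the edge entering $v_k$ from above, and symmetrically on the right, so $\alpha_k$ and $\beta_k$ do share that incoming edge and are distinct. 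Your final remark that one can phrase everything purely in terms of the regions adjacent to the edge entering $v_k$ (rather than invoking subtrees) is a good simplification and is how I would write it up.

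There is nothing to compare against here: the present paper does not prove this proposition but quotes it from Aiello--Nagnibeda \cite{aiello2021maximal}. Your self-contained argument is an appropriate substitute and would stand on its own.
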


Recall the map $\rho \colon W_2 \to \bZ[1/2] \cap [0, 1)$ defined by $\rho(a_1 \cdots a_n) \coloneqq \sum_{i=1}^{n} a_i / 2^i$.
Since $-1 \equiv 1/2 \pmod 3$, we have
\begin{align*}
\omega(a_1 \cdots a_n) \equiv \rho(a_1 \cdots a_n) \pmod 3.
\end{align*}
Namely, each leaves $i_+$ and $i_-$ of any tree diagram $(T_+, T_-)$ of $\cF$ have the same dyadic rational modulo 3. 
Hence we are able to extend the 3-colorable subgroup $\cF$ to the case of an odd integer $p \geq 3$ by using the dyadic rationals.

\begin{definition} \label{Definition_cFp}
The \textbf{$p$-colorable subgroup} $\cF_p$ of Thompson's group $F$ is defined as the set
\begin{align*}
\cF_p \coloneqq \left\{ (T_+, T_-) \in F \mid \forall i \geq 0, \rho(i_+) \equiv \rho(i_-) \pmod p \right\}.
\end{align*}
\end{definition}

We will prove later that this set forms a subgroup of $F$.
Obviously, the 3-colorable subgroup $\cF$ coincides with $\cF_3$.

\begin{example} \label{ex3}
Consider the case $p=3$ and the following tree diagram:
\begin{align*}
(T_+, T_-) := \cinput{27}{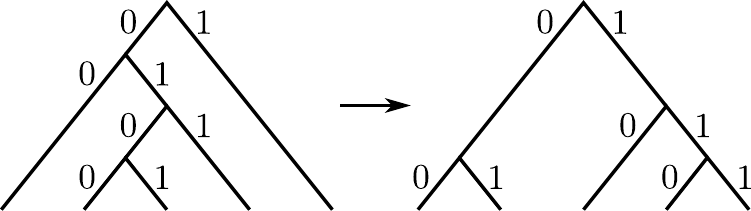_tex}
\end{align*}
Then
\begin{align*}
&\rho(0_+) = \rho(00) = 0  =\rho(00) = \rho(0_-),\\
&\rho(1_+) = \rho(0100) = \frac{1}{4} \equiv 1 \equiv \frac{1}{4} = \rho(01) = \rho(1_-),\\
&\rho(2_+) = \rho(0101) = \frac{5}{16} \equiv 2 \equiv \frac{1}{2} = \rho(10) = \rho(2_-),\\
&\rho(3_+) = \rho(011) = \frac{3}{8} \equiv 0 \equiv \frac{3}{4} = \rho(110) = \rho(3_-),\ \textrm{and}\\
&\rho(4_+) = \rho(1) = \frac{1}{2} \equiv 2 \equiv \frac{7}{8} = \rho(111) = \rho(4_-).
\end{align*}
Therefore, this tree diagram is an element of $\cF_3$.
\end{example}

\begin{example} \label{ex7}
Consider the case $p=7$.
Then the following tree diagram is in $\cF_7$.
\begin{align*}
(T_+, T_-) := \cinput{35}{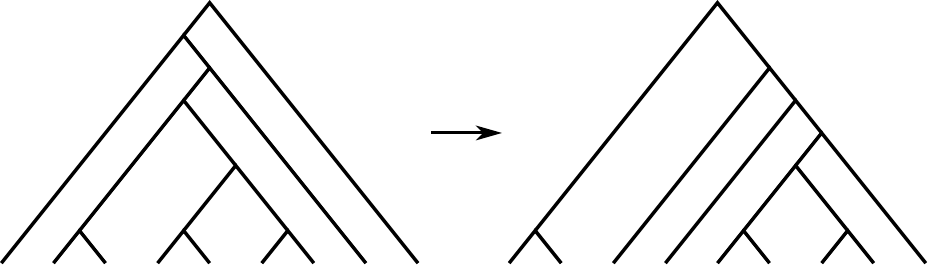_tex}
\end{align*}
Indeed, we have
\begin{alignat*}{2}
&\rho(0_+) = 0 = \rho(0_-), & \quad &\rho(1_+) = \frac{1}{4} \equiv 2 \equiv \frac{1}{4} = \rho(1_-),\\
&\rho(2_+) = \frac{9}{32} \equiv 4 \equiv \frac{1}{2} = \rho(2_-), & \quad &\rho(3_+) = \frac{5}{16} \equiv 6 \equiv \frac{3}{4} = \rho(3_-),\\
&\rho(4_+) = \frac{21}{64} \equiv 0 \equiv \frac{7}{8} = \rho(4_-), & \quad &\rho(5_+) = \frac{11}{32} \equiv 1 \equiv \frac{57}{64} = \rho(5_-),\\
&\rho(6_+) = \frac{23}{64} \equiv 2 \equiv \frac{29}{32} = \rho(6_-), & \quad &\rho(7_+) = \frac{3}{8} \equiv 3 \equiv \frac{59}{64} = \rho(7_-),\ \textrm{and}\\
&\rho(8_+) = \frac{1}{2} \equiv 4 \equiv \frac{15}{16} = \rho(8_-).
\end{alignat*}
\end{example}

In the rest of this section, we rewrite the definition of $\cF_p$ in terms of the coloring of the regions of a tree diagram.

Let $T$ be a binary tree, and consider again a partition of $\bR^2 \times [0,2]$ by $T$.
Each region, except the right unbounded one, has a unique leaf such that the region is the left of the leaf.
Therefore we are able to assign each region the dyadic rational corresponding to its leaf.
We assume that the right unbounded region is assigned 1.

\begin{proposition} \label{dyadic_relation}
Let $a, b$ and $c$ be the dyadic rationals of the regions around a bifurcation below.
Then they satisfy $2b = a + c$.
\begin{align*}
\cinput{0}{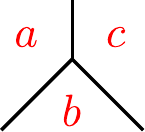_tex}
\end{align*}
\end{proposition}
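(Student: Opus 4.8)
The plan is to identify, for the caret realizing the bifurcation, which leaf of $T$ lies immediately to the right of each of the three regions meeting that caret; then $a,b,c$ are just $\rho$ of those leaves, with the convention that the right unbounded region carries $1$. Write $w\in W_2$ for the word labelling the node of $T$ at which the caret sits, so the two downward edges of the caret end at $w0$ and $w1$. For the two regions that are \emph{local} to the caret the reading is immediate. The region to the left of the edge from $w$ to $w0$ has on its right boundary the leftmost leaf of the subtree rooted at $w0$, which is $w$ followed by a (possibly empty) block of $0$'s; hence $a=\rho(w)$. Similarly the middle region has on its right the leftmost leaf of the subtree rooted at $w1$, namely $w1$ followed by $0$'s, so $b=\rho(w)+2^{-(\|w\|+1)}$.

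The remaining and, I expect, only delicate point is the value $c$ of the region $R$ lying to the right of the edge from $w$ to $w1$: I claim $c=\rho(w)+2^{-\|w\|}$. The subtlety is that $c$ is not local --- the right boundary of $R$ climbs from $w$ up the edge to its parent and keeps rising as long as it enters a node through that node's right child --- so I would argue by induction on $\|w\|$. If $w$ is the root, $R$ is the right unbounded region and $c=1=\rho(w)+2^{0}$. If $w=u0$, then $R$ is precisely the middle region of the caret at $u$, so by the previous paragraph $c=\rho(u)+2^{-(\|u\|+1)}=\rho(w)+2^{-\|w\|}$, using $\rho(u0)=\rho(u)$ and $\|w\|=\|u\|+1$. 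If $w=u1$, then the rightmost leaf of the subtree at $w$ coincides with that of the subtree at $u$, so $R$ is exactly the region to the right of the edge from $u$ to $u1$, and the inductive hypothesis gives $c=\rho(u)+2^{-\|u\|}=\bigl(\rho(u)+2^{-(\|u\|+1)}\bigr)+2^{-(\|u\|+1)}=\rho(w)+2^{-\|w\|}$.

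Combining the three values then finishes the proof: $a+c=\rho(w)+\bigl(\rho(w)+2^{-\|w\|}\bigr)=2\rho(w)+2^{-\|w\|}=2\bigl(\rho(w)+2^{-(\|w\|+1)}\bigr)=2b$. A final bit of care goes to the degenerate configurations --- a child $w0$ or $w1$ that is itself already a leaf, and the root caret --- but in each of these the phrases ``leftmost leaf of the subtree'' and ``walk up toward the root'' still return the correct word, so no separate argument is needed. Equivalently, one can package the whole argument by identifying each region with the dyadic subinterval of $[0,1]$ beneath it, its value being the interval's left endpoint (the right unbounded region corresponding to the endpoint $1$); then the caret at $w$ bisects the interval attached to $w$, the numbers $a$ and $b$ are the left endpoints of the two halves, $c$ is the right endpoint of the interval of $w$, and $2b=a+c$ simply says $b$ is its midpoint --- but one still has to match $R$ with the interval immediately to the right of that of $w$, which is the same walk-up.
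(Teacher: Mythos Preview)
Your proof is correct. The paper argues the same identity but organizes the computation differently: instead of your induction on $\|w\|$ for the value $c$, it splits into two cases according to whether the edge above the bifurcation is a $0$-edge or a $1$-edge. In the first case the node has the form $w\underbrace{0\cdots0}_{k}$ (so the parent caret's middle region gives $c$ directly), and in the second the node has the form $w0\underbrace{1\cdots1}_{k}$, which amounts to pre-computing your ``walk up through right children'' in closed form; the boundary cases of the left and right unbounded regions are then handled separately. Your inductive formulation packages all of this --- including the right-unbounded base case --- into a single uniform argument, at the cost of not displaying the explicit binary words; the paper's version is more concrete but requires four sub-cases. The midpoint interpretation you sketch at the end is a nice way to see why the identity should hold, and matches the underlying geometry.
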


\begin{proof}
\textbf{Case 1.}
Suppose that two regions of $a$ and $c$ share an edge labeled by 0.
The dyadic rational $a$ corresponds to the leaf $w0\cdots 0$, where $w = a_1 \cdots a_n$ is a binary word with $a_n = 1$.
Then the binary word of the bifurcation above $b$ can be written as $w\underbrace{0 \cdots 0}_{k}$ for some $k \in \bZ$, and thus the dyadic rationals $b$ and $c$ correspond to $w\underbrace{0 \cdots 0}_{k}10 \cdots 0$ and $w\underbrace{0 \cdots 0}_{k-1}10 \cdots 0$, respectively.
This situation is described below:
\begin{align*}
\cinput{0}{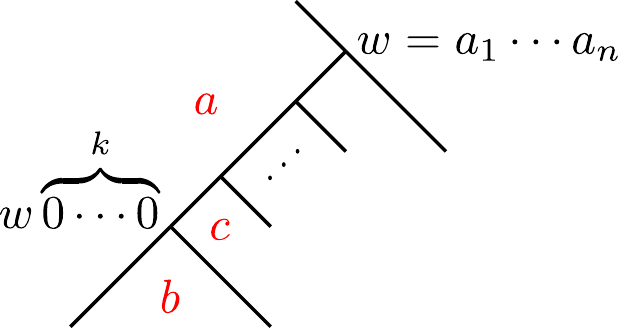_tex}
\end{align*}
Hence
\begin{align*}
a &= \rho(w0 \cdots 0) = \rho(w) = \sum_{i=1}^{n} \frac{a_i}{2^i},\\
b &= \rho(w\underbrace{0 \cdots 0}_{k}10 \cdots 0) = \rho(w\underbrace{0 \cdots 0}_{k}1) = \sum_{i=1}^{n} \frac{a_i}{2^i} + \frac{1}{2^{n+k+1}},\ \textrm{and}\\
c &= \rho(w\underbrace{0 \cdots 0}_{k-1}10 \cdots 0) = \rho(w\underbrace{0 \cdots 0}_{k-1}1) = \sum_{i=1}^{n} \frac{a_i}{2^i} + \frac{1}{2^{n+k}}.
\end{align*}
Therefore
\begin{align*}
2b = 2 \sum_{i=1}^{n} \frac{a_i}{2^i} + \frac{1}{2^{n+k}} = \sum_{i=1}^{n} \frac{a_i}{2^i} + \left( \sum_{i=1}^{n} \frac{a_i}{2^i} + \frac{1}{2^{n+k}}\right) = a + c.
\end{align*}
If $w$ is the empty word, then $a = 0$ is in the left unbounded region:
\begin{align*}
\cinput{0}{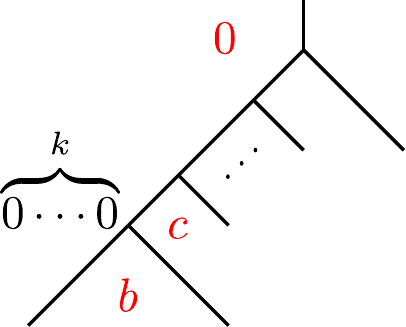_tex}
\end{align*}
We have $b = 1/2^{k+1}$ and $c = 1/2^k$, and thus the equality is clear.

\noindent
\textbf{Case 2.}
Suppose that two regions of $a$ and $c$ share an edge labeled by 1.
If the dyadic rational $c$ is not in the right unbounded region, then $c$ corresponds to the leaf $w10 \cdots 0$, where $w = a_1 \cdots a_n \in W_2$.
Then the binary word of the bifurcation above $b$ can be written as $w0\underbrace{1 \cdots 1}_{k}$ for some $k \in \bZ$, and thus the dyadic rationals $a$ and $b$ correspond to $w0\underbrace{1 \cdots 1}_{k}0 \cdots 0$ and $w0\underbrace{1 \cdots 1}_{k}10 \cdots 0$, respectively.
This situation is described below:
\begin{align*}
\cinput{0}{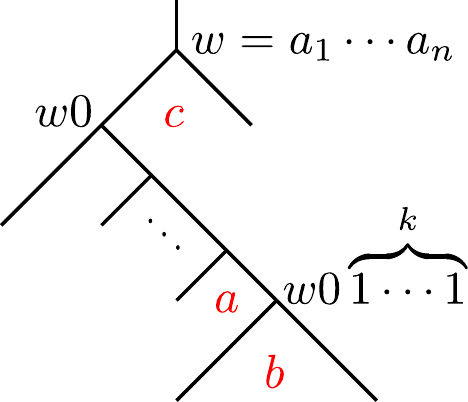_tex}
\end{align*}
Hence
\begin{align*}
a &= \rho(w0\underbrace{1 \cdots 1}_{k}) = \sum_{i=1}^{n} \frac{a_i}{2^i} + \sum_{j=1}^{k} \frac{1}{2^{n+j+1}},\\
b &= \rho(w0\underbrace{1 \cdots 1}_{k}1) = \sum_{i=1}^{n} \frac{a_i}{2^i} + \sum_{j=1}^{k+1} \frac{1}{2^{n+j+1}},\ \textrm{and}\\
c &= \rho(w1) = \sum_{i=1}^{n} \frac{a_i}{2^i} + \frac{1}{2^{n+1}}.
\end{align*}
Therefore
\begin{align*}
2b = 2 \sum_{i=1}^{n} \frac{a_i}{2^i} + \sum_{j=1}^{k+1} \frac{1}{2^{n+j}} = \left( \sum_{i=1}^{n} \frac{a_i}{2^i} + \sum_{j=1}^{k} \frac{1}{2^{n+j+1}} \right) + \left( \sum_{i=1}^{n} \frac{a_i}{2^i} + \frac{1}{2^{n+1}} \right) = a + c.
\end{align*}
Finally, consider the case of $c$ in the right unbounded region:
\begin{align*}
\cinput{0}{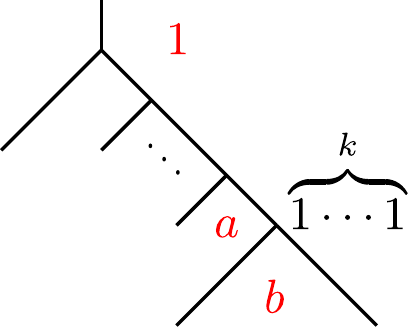_tex}
\end{align*}
Then we have $c = 1, a = \sum_{i=1}^{k} 1/2^i$, and $b = \sum_{i=1}^{k+1} 1/2^i$, and thus the equality can be easily checked.
\end{proof}

A tree diagram $(T_+, T_-)$ is said to be \textbf{$p$-strip-colorable} if there exists a coloring with an element of $\Z{p}$ to each partitioned region such that
\begin{itemize}
\item the left and right unbounded regions are colored with 0 and 1, respectively; and
\item three colors around each bifurcation satisfy the following condition:
\begin{align} \label{tree_col}
\cinput{15}{bifurcation_relation.pdf_tex} \hspace{30pt} 2b \equiv a + c \pmod p.
\end{align}
\end{itemize}
From Proposition \ref{dyadic_relation}, each of binary trees $T_+$ and $T_-$ has a unique coloring of regions satisfying the conditions above induced from the dyadic rationals of the leaves.
Hence, the $p$-strip-colorability of a tree diagram is equivalent to the condition of $\cF_p$:

\begin{proposition}
It holds
\begin{align*}
\cF_p = \left\{ (T_+, T_-) \in F \mid (T_+, T_-)\ \textrm{is $p$-strip-colorable.}\right\}.
\end{align*}
\end{proposition}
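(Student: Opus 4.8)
The plan is to derive the equivalence from two facts: that a single binary tree has a \emph{unique} admissible coloring of its regions, and that the regions of the two-sided strip of a tree diagram are exactly the regions of $T_+$ and of $T_-$ glued along their common leaves.

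First I would upgrade Proposition \ref{dyadic_relation} to a uniqueness statement: for a binary tree $T$ there is exactly one $\Z{p}$-coloring of the regions of the strip $\bR \times [0,2]$ cut out by $T$ which gives the left and right unbounded regions the colors $0$ and $1$ and satisfies \eqref{tree_col} at every bifurcation, namely the one assigning $\rho(i) \bmod p$ to the region left of leaf $i$. Existence is Proposition \ref{dyadic_relation} together with $\rho(\text{leftmost leaf}) = 0$. For uniqueness I would induct on the number of carets of $T$: for one caret the single inner region $b$ must satisfy $2b \equiv 0 + 1 \pmod p$, which determines $b$ because $p$ is odd and hence $2$ is invertible in $\Z{p}$; and if $T$ is obtained from a tree $T'$ by attaching a caret to one of its leaves, then $T$ has exactly one region more than $T'$ and exactly one bifurcation relation more, of the form $2b \equiv a + c \pmod p$ where $a$ and $c$ are regions of $T'$, hence already determined, so $b$ is forced.

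Next I would set up the region dictionary for a tree diagram $(T_+, T_-)$, whose two trees share a common set of $n$ leaves. Each region of the strip $\bR \times [-2,2]$ is either the left unbounded region, the right unbounded region, or the region left of a leaf $i$ with $1 \le i \le n-1$, which straddles $\bR \times \{0\}$: its intersection with $\bR \times [0,2]$ is the region of $T_+$ left of leaf $i$, and its intersection with $\bR \times [-2,0]$ is the region of $T_-$ left of leaf $i$; the two unbounded regions restrict likewise to the unbounded regions of $T_+$ and of $T_-$. Moreover each bifurcation of the tree diagram is a caret of $T_+$ or a caret of $T_-$, and under this dictionary the relation \eqref{tree_col} it imposes is precisely the relation of that caret inside $T_+$, respectively $T_-$.

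With these two ingredients the proof is short. If $(T_+, T_-) \in \cF_p$, color the region left of leaf $i$ by the common residue $\rho(i_+) \equiv \rho(i_-) \pmod p$ (this is $0$ when $i = 0$) and the right unbounded region by $1$; by Proposition \ref{dyadic_relation} applied to $T_+$ and to $T_-$ this coloring satisfies \eqref{tree_col} at every caret of $T_+$ and of $T_-$, hence at every bifurcation, so $(T_+, T_-)$ is $p$-strip-colorable. Conversely, a $p$-strip-coloring $\gamma$ restricts over $\bR \times [0,2]$ to an admissible coloring of the regions of $T_+$, so by the uniqueness above it takes the value $\rho(i_+) \bmod p$ on the region left of leaf $i$ for every $i$; restricting $\gamma$ over $\bR \times [-2,0]$ likewise forces the value $\rho(i_-) \bmod p$ on that same region, whence $\rho(i_+) \equiv \rho(i_-) \pmod p$ for all $i$, i.e.\ $(T_+, T_-) \in \cF_p$. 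I expect the only delicate point to be the region dictionary of the third paragraph, i.e.\ checking that gluing $T_+$ and $T_-$ along their leaves introduces no spurious regions or relations along the seam $\bR \times \{0\}$ and matches the unbounded regions correctly; the hypothesis that $p$ is odd is used only in the uniqueness step.
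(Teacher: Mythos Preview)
Your proposal is correct and follows essentially the same route as the paper: both argue that each half-strip carries a \emph{unique} admissible coloring (the one induced by the dyadic rationals via Proposition~\ref{dyadic_relation}), so $p$-strip-colorability reduces to agreement of $\rho(i_+)$ and $\rho(i_-)$ modulo $p$ along the seam. Your write-up is in fact more careful than the paper's one-sentence justification, since you make explicit the inductive uniqueness argument (using that $2$ is invertible modulo the odd integer $p$) and the region dictionary between the strip and the two half-strips.
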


In order to prove that this set forms a subgroup of $F$, it is enough to check that inserting carets preserves the $p$-strip-colorability, which is obvious from the figure below:
\begin{align*}
\cinput{35}{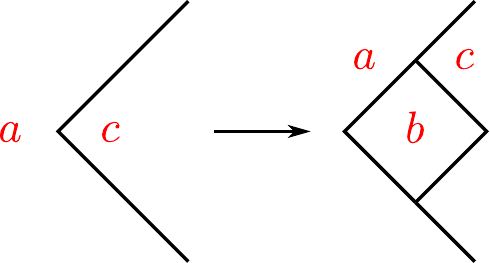_tex} \hspace{30pt}
2b \equiv a+c \pmod p.
\end{align*}

\subsection{Knots and links obtained from $\cF_p$}
In this section, we describe a property of knots and links obtained from elements in the $p$-colorable subgroup $\cF_p$.

\begin{theorem} \label{p-col_knot}
All knots and links obtained from non-trivial elements of $\cF_p$ are $p$-colorable.
\end{theorem}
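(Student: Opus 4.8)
The plan is to connect the $p$-strip-colorability of a reduced tree diagram $(T_+,T_-)$ — which, by the previous proposition, is exactly the defining condition of $\cF_p$ — to a Dehn $p$-coloring of the link diagram $\cL(T_+,T_-)$, and then invoke the fact that Dehn $p$-colorability is a link invariant. First I would recall from Jones' construction that the regions of the plane graph $\cB(T_+,T_-)$ are precisely the regions of the tree diagram $(T_+,T_-)$ sitting inside the strip $\bR\times[-2,2]$ (plus the outer region), and that each $4$-valent vertex of $\cB(T_+,T_-)$ arises from a pair of carets, one in $T_+$ and one in $T_-$, joined by the connecting edge. So a region-coloring of the strip induces a region-coloring of $\cL(T_+,T_-)$, and the outer (left and right unbounded) regions of the strip get merged into the single unbounded region of $\cL(T_+,T_-)$; here I will need the normalization that lets us take the unbounded region to be $0$, which requires first checking the two unbounded strip-colors $0$ and $1$ are consistent with collapsing — more on this obstacle below.

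Next I would verify the Dehn relation \eqref{Dehn} at every crossing. A crossing comes from a vertex of $\cB(T_+,T_-)$, which is the meeting point of a caret of $T_+$ (a bifurcation with surrounding colors $a,b,c$ satisfying $2b\equiv a+c$) and the mirror-image caret of $T_-$ (a bifurcation with colors $a',b',c'$ satisfying $2b'\equiv a'+c'$), glued along the connecting edge so that the left and right regions are shared: $a=a'$ and $c=c'$, while the top region has color $b$ and the bottom region has color $b'$. The four regions around the crossing are then $b$ (top), $c$ (right), $b'$ (bottom), $a$ (left), and the Dehn condition $b+b'\equiv a+c\pmod p$ (opposite regions sum equally) follows by adding $2b\equiv a+c$ and $2b'\equiv a'+c'=a+c$ and dividing by $2$ — this is where oddness of $p$ is used, so that $2$ is invertible mod $p$. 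I would draw the local picture matching the crossing conventions $\cinput{5}{plus_vertex_crossing.pdf_tex}$ and $\cinput{5}{minus_vertex_crossing.pdf_tex}$ to confirm the pairing of opposite regions is as claimed regardless of which crossing sign is chosen.

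It then remains to show the induced Dehn $p$-coloring is non-trivial when the element of $\cF_p$ is non-trivial. The idea is that for a reduced non-trivial tree diagram the strip-coloring is non-constant on the white (or black) regions in a way that survives the collapse of the two unbounded regions: since the left and right unbounded regions already carry distinct colors $0$ and $1$, after translating so the unbounded region is $0$ the black/white checkerboard labeling cannot be the trivial one. I would argue that a trivial Dehn coloring would force $\omega$-type values to be constant across a caret, which for a reduced diagram of a non-identity element fails at some leaf. The main obstacle I anticipate is precisely this last non-triviality step together with the bookkeeping of the checkerboard coloring and the unbounded-region normalization: I must make sure that collapsing the left and right unbounded strips into one region is compatible with the Dehn axiom (they are adjacent across the two short edges added at the roots, and one checks the colors $0$ and $1$ there are forced, then shows a global shift by a unit of $\Z p$ handles the mismatch), and I should double-check the degenerate cases where $w$ is the empty word (the left unbounded region) exactly as Proposition~\ref{dyadic_relation} does. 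Once non-triviality is in hand, Remark~\ref{mapL} ensures that passing to the reduced diagram (and hence to $\cL(g)$) only deletes split unknot components, which do not affect $p$-colorability, so the conclusion transfers to all knots and links obtained from non-trivial elements of $\cF_p$.
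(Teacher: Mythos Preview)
Your proposal rests on a mistaken reading of Jones' construction, and this causes the Dehn verification to collapse. You write that ``the regions of the plane graph $\cB(T_+,T_-)$ are precisely the regions of the tree diagram $(T_+,T_-)$'' and that ``a crossing \ldots\ is the meeting point of a caret of $T_+$ \ldots\ and the mirror-image caret of $T_-$ \ldots\ glued along the connecting edge''. Neither is correct. In the construction of Section~\ref{jones}, one \emph{adds} an edge through every region of the tree diagram; this subdivides each bounded region into two pieces, so $\cB(T_+,T_-)$ has strictly more regions than the strip. Moreover the $4$-valent vertices of $\cB(T_+,T_-)$ are the individual caret roots (each originally trivalent, receiving exactly one new edge), not pairs of roots joined by a contracted connecting edge. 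Thus every crossing of $\cL(T_+,T_-)$ sits at a \emph{single} bifurcation of $T_+$ or of $T_-$, and the four incident regions are: a piece of the left strip-region $a$, a piece of the right strip-region $c$, and the two halves of the middle strip-region $b$. Your four-region pattern $b,c,b',a$ with $a=a'$, $c=c'$ from a $T_+$/$T_-$ pair never occurs, so the identity $b+b'\equiv a+c$ you derive is checking the wrong relation at the wrong vertex.

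If you try to repair things by colouring each half-region with the strip colour of the region it came from, the four colours around a bifurcation become $a,c,b,b$, and under the paper's crossing convention the Dehn condition reads $a+b\equiv c+b$, i.e.\ $a\equiv c$, which fails in general. This is why the paper does \emph{not} use the strip colours directly: it assigns to the two halves of each middle region (the ``left region'' and ``right region'' of a bifurcation) colours built from the strip data in such a way that the two sums across the overstrand come out as $-a+b$ and $-b+c$; these agree modulo $p$ precisely because of the strip relation $2b\equiv a+c$. The subtlety you flag about the two unbounded strip-regions collapsing is also an artefact of the wrong picture: the outer connecting edge separates a genuinely unbounded region of $\cB(T_+,T_-)$ from a bounded annular piece, so no collapsing of $0$ and $1$ is needed. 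To fix the argument you must redo the local analysis at a single bifurcation with the subdivided regions, and find the correct labelling of the half-regions (as the paper does via the figure referenced in its proof) rather than the raw strip colours.
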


\begin{proof}
In order to show this theorem, we will find a non-trivial Dehn $p$-coloring of the link diagram $\cL(g)$ for any non-trivial element $g = (T_+, T_-) \in \cF_p$.

Let $(T_+, T_-)$ be a reduced tree diagram in $\cF_p$.
In Jones' construction in Section \ref{jones}, we obtain the plane graph $\cB(T_+, T_-)$ by dividing each region by an edge.
Namely, there are two regions, called the \textit{left region} and \textit{right region}, for each bifurcation of $T_+$ (or $T_-$).
Then we assign a number to the regions of $\cB(T_+, T_-)$ by using the $p$-strip-coloring like below:
\begin{align*}
\cinput{0}{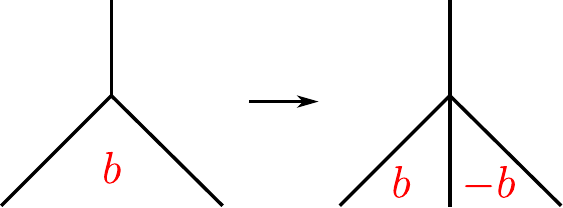_tex}
\end{align*}
By the replacing rules of Jones' construction, we obtain the link diagram $\cL(T_+, T_-)$.
In fact, this satisfies the condition (\ref{Dehn}) for each crossing (i.e.~bifurcation).
Indeed, the upper left (resp.~right) region of a bifurcation in $\cB(T_+, T_-)$ is actually the \textit{right region} (resp.~\textit{left region}) of another bifurcation above.
This situation is described below: Case 1 in the proof of Proposition \ref{dyadic_relation} is of the form
\begin{align*}
\cinput{0}{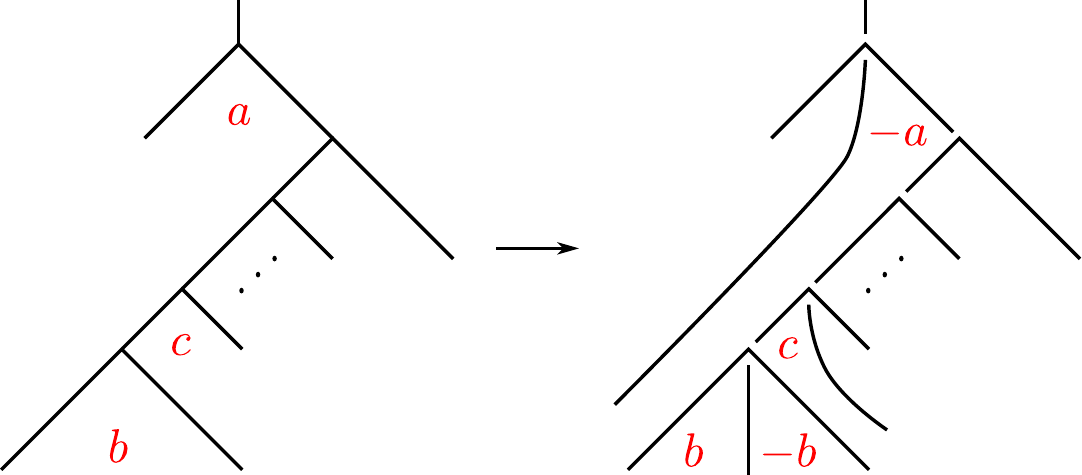_tex}
\end{align*}
and Case 2 is of the form
\begin{align*}
\cinput{0}{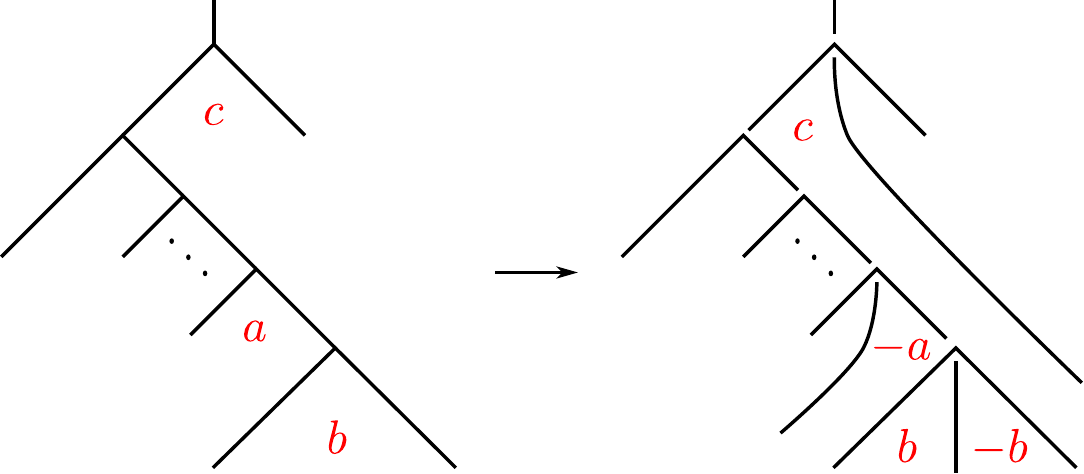_tex}
\end{align*}
Therefore, the sum of numbers of left (resp.~right) two regions around a bifurcation is $-a+b$ (resp.~$-b+c$), and they are equal modulo $p$ from the condition (\ref{tree_col}).
We easily see that this is a non-trivial Dehn $p$-coloring of $\cL(T_+, T_-)$.
\end{proof}

\begin{example} \label{ex3_knot}
Consider the tree diagram $(T_+, T_-)$ in Example \ref{ex3}.
Then the knot $\cL(T_+, T_-)$ is the trefoil knot $3_1$ and it is 3-colorable; see Figure~\ref{3-col_example_knot}.
\begin{figure}[tbp]
\begin{center}
\includegraphics[height=145pt]{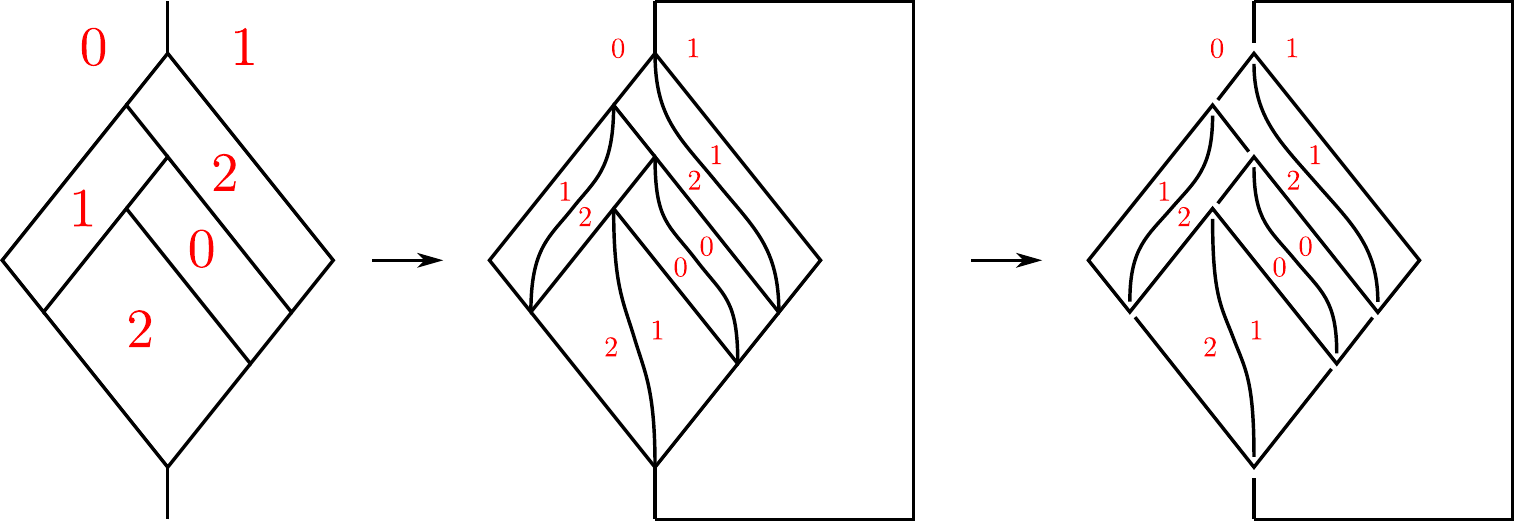}
\end{center}
\caption{The leftmost figure is the 3-strip-coloring of $(T_+, T_-)$ in Example \ref{ex3}, and the rightmost one is the associated non-trivial Dehn 3-coloring of the knot diagram $\cL(T_+, T_-)$.}
\label{3-col_example_knot}
\end{figure}
\end{example}

\begin{example} \label{ex7_knot}
Consider the tree diagram $(T_+, T_-)$ in Example \ref{ex7}.
Then the knot $\cL(T_+, T_-)$ is a twist knot $5_2$ which is $7$-colorable; see Figure~\ref{7-col_example_knot}.
\begin{figure}[tbp]
\begin{center}
\includegraphics[height=180pt]{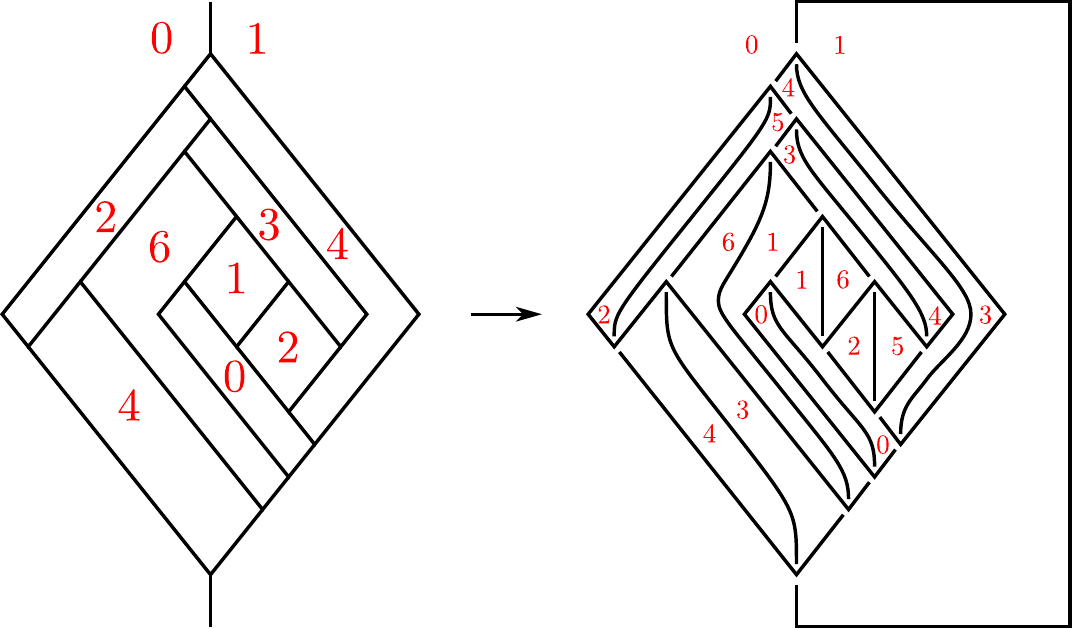}
\end{center}
\caption{The left figure is the 7-strip-coloring of $(T_+, T_-)$ in Example \ref{ex7}, and the right one is the associated non-trivial Dehn 7-coloring of the knot diagram $\cL(T_+, T_-)$.}
\label{7-col_example_knot}
\end{figure}
\end{example}

\begin{remark}
For an odd integer $p \geq 3$, let $q$ be a positive integer satisfying $2^q \equiv 1 \pmod p$.
Consider the tree diagram $(T_+, T_-)$ as follows:
\begin{align*}
(T_+, T_-) \coloneqq \cinput{20}{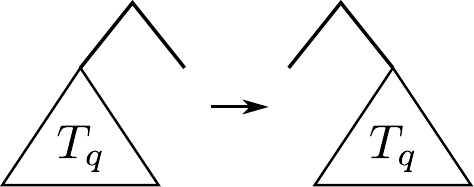_tex}
\end{align*}
Then $(T_+, T_-)$ is in $\cF_p$ from Proposition \ref{cFp-length} below.
By direct calculation, we see that the number of components of $\cL(T_+, T_-)$ is one.
Hence, there always exists an element in $\cF_p$ which produces a non-trivial $p$-colorable knot.
For example, if $p=3$ and $q=2$, then we obtain the trefoil knot $3_1$.
Also if $p=7$ and $q=3$, then we obtain the knot $7_7$.
\end{remark}

\section{Prorerties of the $p$-colorable subgroup $\cF_p$} \label{section_property_cFp}
\subsection{A relationship with the Brown-Thompson group} \label{subsec_isomorphism}
In this section, we give another interpretation of the $p$-colorable subgroup $\cF_p$ and prove that it is isomorphic to the certain Brown--Thompson group.

Let $(T_+, T_-)$ be a tree diagram of $\cF_p$.
Fix a leaf $i_+$ of $T_+$ (resp.~$i_-$ of $T_-$), and suppose that $\| i_+ \| = k$ (resp.~$\| i_- \| = m$).
Inserting carets to a leaf $i$, we obtain an equivalent tree diagram $(T'_+, T'_-)$.
Then the lengths of $(i+1)_+$ and $(i+1)_-$ of $T'_+$ and $T'_-$ are $k+1$ and $m+1$, respectively.
Therefore they must satisfy $2^{k+1} \equiv 2^{m+1} \pmod p$.
This is equivalent to $k \equiv m \pmod q$, where $q$ is the smallest positive integer satisfying $2^q \equiv 1 \pmod p$.
Namely, each leaves $i_+$ and $i_-$ of a tree diagram of $\cF_p$ have the same length modulo $q$.
Moreover we have the following result:

\begin{proposition} \label{q_condition} \label{cFp-length}
It holds
\begin{align*}
\cF_p =  \left\{ (T_+, T_-) \in F \mid \forall i \geq 0, \| i_+ \| \equiv \| i_- \| \pmod q \right\}.
\end{align*}
\end{proposition}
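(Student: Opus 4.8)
The inclusion $\cF_p \subseteq \{(T_+,T_-) : \forall i,\ \|i_+\|\equiv\|i_-\|\pmod q\}$ is already essentially established in the paragraph preceding the statement: if $(T_+,T_-)\in\cF_p$ then for every leaf $i$ we have $\rho(i_+)\equiv\rho(i_-)\pmod p$, and writing $i_+=a_1\cdots a_k$, $i_-=b_1\cdots b_m$, the denominators $2^k$ and $2^m$ appearing in $\rho(i_+)$ and $\rho(i_-)$ must be compatible modulo $p$; I would make this precise by clearing denominators. Concretely, $\rho(i_+)\equiv\rho(i_-)\pmod p$ means $\rho(i_+)-\rho(i_-)\in p\,\bZ[1/2]$, i.e. $2^N(\rho(i_+)-\rho(i_-))\equiv 0\pmod p$ for $N=\max(k,m)$ (using that $2$ is invertible mod the odd number $p$), and then I need the stronger fact that $\|i_+\|\equiv\|i_-\|\pmod q$. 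The cleanest route is to reduce to a single leaf argument: insert one caret at leaf $i$ on both sides, which lengthens both words by one, and observe (as the preceding paragraph does) that consistency of membership forces $2^{k+1}\equiv 2^{m+1}\pmod p$, hence $2^{k}\equiv 2^m \pmod p$, hence $q \mid (k-m)$ by the definition of $q$ as the multiplicative order of $2$ modulo $p$. I would phrase this as: $\cF_p$ is closed under caret insertion (already proved), and a leaf of length $\ell$ always admits a caret insertion producing leaves of length $\ell+1$; the $p$-strip-colorability constraint then yields the length congruence.

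For the reverse inclusion I would argue directly. Suppose $(T_+,T_-)\in F$ satisfies $\|i_+\|\equiv\|i_-\|\pmod q$ for all $i$. I want $\rho(i_+)\equiv\rho(i_-)\pmod p$. Here I would use two observations about the map $\rho$ on $W_2$. First, appending a zero does not change the value: $\rho(w0)=\rho(w)$, so only the "reduced" word (with trailing zeros stripped) matters, and stripping trailing zeros changes the length, so I should be careful — the relevant invariant is really about the word up to trailing zeros together with its length class mod $q$. Second, and this is the key algebraic input: since $2^q\equiv 1\pmod p$, the value $\rho(w)=\sum a_i/2^i$ modulo $p$ depends only on the residues $a_i$ grouped in blocks of length $q$, because $1/2^{i+q}\equiv 1/2^i \pmod p$. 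More usefully, I would reverse the word: $2^{\|w\|}\rho(w) = \sum_{i} a_i 2^{\|w\|-i}$ is the integer whose binary expansion is $w$ read as a number (padding), call it $n(w)$; then $\rho(w)\equiv 2^{-\|w\|}n(w)\pmod p$. So $\rho(i_+)\equiv\rho(i_-)\pmod p$ is equivalent to $2^{-k}n(i_+)\equiv 2^{-m}n(i_-)\pmod p$, i.e. $2^{m}n(i_+)\equiv 2^{k}n(i_-)\pmod p$. Given $k\equiv m\pmod q$ and $2^q\equiv1$, we have $2^k\equiv 2^m\pmod p$, so this reduces to showing $n(i_+)\equiv n(i_-)\pmod p$.

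So the crux of the reverse direction is: the length condition mod $q$ implies $n(i_+)\equiv n(i_-)\pmod p$ for every leaf $i$. This I would prove by induction on the tree, i.e. by building $(T_+,T_-)$ from the trivial diagram via caret insertions, exactly as in the closure argument — but now I need to know that the length-mod-$q$ condition is itself preserved and detected by caret insertions, and that the trivial diagram satisfies $n(i_+)=n(i_-)$ trivially. Actually the slicker formulation: $\cF_p$ and the right-hand set are both unions of $\sim$-equivalence classes (both are caret-insertion-closed), both contain the identity, and on a single caret both conditions hold; since every element is obtained from the identity by insertions on one side and the other, and the two conditions are preserved in lockstep under insertion (a caret insertion at leaf $i$ replaces the pair $(\rho(i_+),\rho(i_+))$-type local data consistently on both sides), the two sets coincide. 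I would make the bookkeeping rigorous by tracking, for each leaf $i$, the pair $(\rho(i_+) \bmod p,\ \|i_+\|\bmod q)$ and showing a caret insertion transforms it the same way on $T_+$ and $T_-$ as long as the invariant held before — this is where I would be most careful, since a caret insertion at leaf $i$ affects leaves $i$ and $i+1$ and reindexes the rest.

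The main obstacle. The genuine content is establishing the equivalence, for a pair of words $(u,v)$ with $\|u\|\equiv\|v\|\pmod q$, between "$\rho(u)\equiv\rho(v)\pmod p$" and the combinatorial length condition — in particular proving that the length condition is \emph{sufficient}, not just necessary. Necessity is the easy clearing-denominators direction already sketched in the paper. Sufficiency requires the identity $1/2^{i+q}\equiv 1/2^i\pmod p$ and a careful induction showing that the tree-building process never breaks the $\rho$-congruence once the length-congruence is maintained; the indexing shifts when inserting a caret (leaf $i$ becomes leaves $i$ and $i+1$, later leaves shift by one) are the fiddly part, and I expect that to be the step consuming the most care, though no deep idea beyond $\mathrm{ord}_p(2)=q$ is needed.
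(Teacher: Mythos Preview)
Your treatment of the forward inclusion matches the paper's: insert a caret at leaf $i$ and compare $\rho$-values of the new leaf $i+1$ on both sides to force $2^{k+1}\equiv 2^{m+1}\pmod p$, hence $q\mid(k-m)$.

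The reverse inclusion, however, has a genuine gap. Your reduction to ``show $n(i_+)\equiv n(i_-)\pmod p$'' is correct, but the proposed proof---``build $(T_+,T_-)$ from the trivial diagram via caret insertions''---cannot work as stated. Simultaneous caret insertions (same leaf index on both sides) generate only the $\sim$-class of the identity, so you never reach a nontrivial element. If instead you insert at leaf $i$ on the $+$ side and a different leaf $j$ on the $-$ side, you do reach arbitrary pairs, but this operation does \emph{not} preserve the length-mod-$q$ hypothesis: after such an asymmetric insertion the new leaf $i$ satisfies $\|i_+\|_{\mathrm{new}}=\|i_+\|+1$ while $\|i_-\|_{\mathrm{new}}=\|i_-\|$, destroying the congruence. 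So there is no inductive path through pairs satisfying your hypothesis, and the ``bookkeeping'' you describe has nothing to track. This is the missing idea, not just fiddly indexing.

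The paper avoids this by a structural argument. Given $(T_+,T_-)$ with $\|i_+\|\equiv\|i_-\|\pmod q$ for all $i$, it first inserts carets (symmetrically, so within the same $\sim$-class) until every leaf on both sides has length divisible by $q$; this is possible precisely because of the congruence hypothesis. It then proves two lemmas: any tree with all leaf-lengths in $q\bZ$ is obtained by iteratively attaching copies of the complete depth-$q$ tree $T_q$ (Lemma~\ref{q_length}), and for any such tree the sequence $\rho(i)\bmod p$ is the fixed periodic pattern $0,1,\ldots,p-1,0,1,\ldots,p-1,\ldots,0$ (Lemma~\ref{q_dyadic}). Since both $T'_+$ and $T'_-$ exhibit the \emph{same} forced pattern, $\rho(i_+)\equiv\rho(i_-)\pmod p$ for all $i$.

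If you want to salvage your direct line, the missing observation is the telescoping identity
\[
\rho(i)=\sum_{j=0}^{i-1}2^{-\|j\|},
\]
valid in any binary tree (each summand is the length of the interval for leaf $j$). Reducing mod $p$, this shows $\rho(i)\bmod p$ depends only on the sequence $(\|j\|\bmod q)_{j<i}$, which by hypothesis is the same for $T_+$ and $T_-$. That one line replaces both of the paper's lemmas and gives a shorter proof, but it is not the argument you actually proposed.
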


In order to show the converse inclusion, we use a binary tree $T_q$ with $2^q$ leaves defined in Section \ref{Brown--Thompson}.
\begin{lemma} \label{q_length}
Let $T$ be a binary tree such that each leaf $i$ satisfies $\| i \| = k_i q$ for some positive integer $k_i$.
Then $T$ can be obtained by attaching some $T_q$'s.
\end{lemma}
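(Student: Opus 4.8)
We must show: if $T$ is a binary tree such that every leaf $i$ has $\|i\| = k_i q$ for some positive integer $k_i$, then $T$ is obtained by attaching copies of $T_q$ (the complete binary tree of depth $q$, with $2^q$ leaves).

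The plan is to argue by induction on the size of $T$ (equivalently, on the number of leaves, or on $\sum_i k_i$). The base case is the tree with a single leaf of length $q$: the only binary tree all of whose leaves have length exactly $q$ is $T_q$ itself, so the statement is immediate there. For the inductive step, the key observation is to look at a deepest leaf $i$ of $T$, i.e. one maximizing $\|i\|$. Its sibling $i'$ (the leaf sharing the same parent caret) must also be a leaf: if $i'$ were instead the root of a nontrivial subtree, that subtree would contain a leaf of length $> \|i\|$, contradicting maximality. Now I claim the entire length-$(\|i\|-1)$ ancestor of $i$ — call it the node $v$ at depth $\|i\|-1$ — actually spans a full copy of $T_q$ sitting at the bottom of $T$. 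Here is where the divisibility hypothesis does the work: since $\|i\| = k_i q$, the node $v$ sits at depth $k_i q - 1 \equiv -1 \equiv q-1 \pmod q$, so $v$ is at depth $q-1$ within the bottommost "block" of $q$ levels.

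The main step, then, is to prove that every leaf descended from the depth-$(\|i\|-q)$ ancestor $u$ of $i$ in fact has length exactly $\|i\|$, which forces the subtree rooted at $u$ to be precisely $T_q$. To see this, let $w$ be the word of $u$, so $\|w\| = \|i\| - q = (k_i - 1)q$. Any leaf $j$ below $u$ has $w$ as a prefix and, by hypothesis, $\|j\| = k_j q$; since $\|j\| \geq \|w\| = (k_i-1)q$ we get $k_j \geq k_i - 1$, and since $\|j\| \le \|i\| = k_i q$ (as $i$ is a deepest leaf) we get $k_j \le k_i$. If some leaf $j$ below $u$ had $k_j = k_i - 1$, i.e. $\|j\| = \|w\|$, then $j = u$ would be a leaf and $u$ would have no proper descendants, contradicting that $i$ (which has $\|i\| > \|w\|$) lies strictly below $u$. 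Hence $k_j = k_i$ for every leaf $j$ below $u$, i.e. all such leaves have length exactly $\|i\|$; a binary tree rooted at a node of depth $d$ all of whose leaves have depth $d+q$ is exactly the complete depth-$q$ tree, i.e. $T_q$. Now perform a reduction: delete this copy of $T_q$, replacing it by its root $u$, which becomes a new leaf of length $(k_i-1)q$, still a multiple of $q$. The resulting smaller tree $T'$ still satisfies the hypothesis of the lemma, so by induction $T'$ is built from copies of $T_q$; re-attaching the removed $T_q$ at $u$ exhibits $T$ as built from copies of $T_q$, completing the induction.

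The only genuinely delicate point — the "main obstacle," though it is modest — is making precise that a deepest leaf's grandparent-chain up $q$ levels really is the root of a full $T_q$, rather than just a partial block; this is exactly the content of the length-exactly-$\|i\|$ claim above, and it is where both the maximality of $\|i\|$ and the $q \mid \|j\|$ hypothesis are used simultaneously. Everything else (the base case, the reduction step, and the re-attachment) is bookkeeping on tree sizes. An alternative, essentially equivalent, formulation would be a top-down induction on $q$-blocks: show the top caret extends to a copy of $T_q$ near the root and quotient it out; I would still use the bottom-up version since "pick a deepest leaf" gives the cleanest contradiction argument.
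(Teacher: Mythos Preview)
Your proof is correct and follows essentially the same strategy as the paper's: locate a copy of $T_q$ sitting at a leaf of maximal depth, remove it, and iterate (the paper phrases the iteration informally rather than as an explicit induction, but the content is the same). The only cosmetic difference is in how the bottom $T_q$ is pinned down: the paper chooses the \emph{leftmost} deepest leaf and uses leftmost-ness to argue its last $q$ bits are all $0$ (so it is the left edge of a $T_q$ block), whereas you take any deepest leaf and argue directly that its depth-$(\|i\|-q)$ ancestor roots a complete $T_q$---these are minor variations on the same idea.
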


\begin{proof}
Choose the leaf $l$ of $T$ such that it is the leftmost leaf with the maximum length.
Let $a_1 \cdots a_{k_lq}$ be the binary word corresponding to the leaf $l$.
Then we have $a_{(k_l-1)q+1} = \cdots = a_{k_lq} = 0$.
Indeed, if there exists a number $i$ such that $a_{(k_l-1)q+i} = 1$, then there exists a leaf $m < l$ of $T$ such that $k_m = k_l$, which is a contradiction to the leaf $l$ being the leftmost leaf with the maximum length.
Therefore the lengths of all of the leaves from $l$ to $l+2^q-1$ are $k_lq$.
Note that their binary words are of the form $a_1 \cdots a_{(k_l-1)q}b_1 \cdots b_q$, where $b_j \in \{ 0, 1 \}$.
Since the number of them is $2^q$, the set $\{ b_1 \cdots b_q \mid b_j \in \{ 0, 1 \} \}$ corresponds to the binary tree $T_q$.
Let $T'$ be the binary tree obtained from $T$ by removing this binary tree $T_q$.
Applying the same argument to $T'$, we find a binary tree $T_q$ again from it.
\end{proof}

\begin{lemma} \label{q_dyadic}
Suppose that a binary tree $T$ satisfies the assumption in Lemma $\ref{q_length}$.
Then the dyadic rationals of its leaves modulo $p$ are of the form
\begin{align*}
0, 1, 2, \ldots, p-1, 0, 1, 2, \ldots, p-1, \ldots, 0, 1, 2, \ldots, p-1, 0
\end{align*}
from left to right.
\end{lemma}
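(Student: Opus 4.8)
The plan is to prove Lemma \ref{q_dyadic} by induction on the number of $T_q$-blocks appearing in the decomposition of $T$ guaranteed by Lemma \ref{q_length}, using the bifurcation relation $2b \equiv a+c$ from Proposition \ref{dyadic_relation} as the bookkeeping tool. The base case is $T = T_q$ itself: the leaves $b_1\cdots b_q$ run through all binary words of length $q$ in lexicographic order, so by Proposition 2.5 (applied in the form $\rho \equiv \omega$, or directly since $\rho(b_1\cdots b_q) = \sum b_j/2^j$ and $2^q \equiv 1 \pmod p$ makes $\rho$ behave additively mod $p$) the dyadic rationals of the leaves are $0,1,\ldots$ and one checks they hit exactly $0,1,2,\ldots,2^q-1 \bmod p$; since $2^q \equiv 1$, the list $0,1,\ldots,2^q-2$ wraps around through $\{0,\ldots,p-1\}$ a whole number of times and the final region (the right unbounded one) gets $1 \equiv 2^q \bmod p$. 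Actually the cleanest base-case statement: the leftmost region is $0$, the regions read off left to right are $0,1,2,\ldots \bmod p$ consecutively, and there are $2^q$ of them followed by the closing region colored $1$; since $2^q - 1 \equiv 0 \pmod p$ the count of interior regions is a multiple of $p$, giving exactly the claimed periodic pattern $0,1,\ldots,p-1,\ldots,0,1,\ldots,p-1$ then $0$.

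For the inductive step I would peel off the leftmost-deepest $T_q$-block as in the proof of Lemma \ref{q_length}: let $T'$ be $T$ with one copy of $T_q$ collapsed back to a single leaf $\ell$, and let $(T')$ have fewer blocks, so by induction the regions of $T'$ read $0,1,\ldots,p-1,0,1,\ldots,p-1,\ldots,0$ from left to right. Re-expanding the leaf $\ell$ into $T_q$ replaces the single region immediately to the left of $\ell$ — call its color $a$ — and the region to the left of the leaf $\ell+1$ — call it $c$ — by the $2^q$ regions coming from the bifurcations of the inserted $T_q$. The point is that inserting $T_q$ at a leaf is exactly an iterated caret insertion, and the remark right after Proposition \ref{dyadic_relation} (``inserting carets preserves the $p$-strip-colorability'' via $2b \equiv a+c$) shows the new internal region colors are forced. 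Concretely, the colors of the $2^q$ new regions are $a$ together with $a + j\cdot(c-a)\cdot(\text{something})$... here is where I must be a little careful: the relation $2b\equiv a+c$ with the right and left unbounded references rescaled to $a$ and $c$ forces the new regions to be $a + t(c-a)$ for $t = 0, 1/2^q, 2/2^q, \ldots$ — but modulo $p$, multiplication by $2^q$ is the identity, so these are just $a, a + (c-a)\cdot 2^{q-1}\cdot 2, \ldots$; the honest formulation is that after the $T_q$-expansion the new region colors are $a\cdot(1 - \rho(b_1\cdots b_q)) + c\cdot(\text{stuff})$, but since $2^q\equiv 1$ the affine map sending the $T_q$-pattern $0,1,\ldots,2^q-1,(2^q)$ onto $a, \ldots, c$ is just $x \mapsto a + (c-a)x \bmod p$, which sends $0\mapsto a$, $2^q \equiv 1 \mapsto a + (c-a) \cdot 2^q \equiv c$, consistent; and since $c = a+1$ in $\Z p$ (adjacent regions of $T'$ differ by $1$ by the inductive hypothesis — with the single exception of a wraparound from $p-1$ to $0$, which is the same thing mod $p$), the map is $x \mapsto a + x$, so the new regions are $a, a+1, a+2, \ldots, a + 2^q - 1 \equiv a$, i.e. they extend the consecutive pattern seamlessly and, because $2^q - 1 \equiv 0 \pmod p$, the running pattern is unchanged in shape. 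This closes the induction.

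The main obstacle I anticipate is making the ``seamless extension'' argument rigorous: I need to confirm that in $T'$ consecutive regions indeed differ by exactly $+1$ in $\Z p$ (so that $c - a = 1$), that the affine rescaling of the $T_q$ pattern to fit between region-color $a$ on the left and $c$ on the right is legitimate — this follows because both the $T_q$ sub-coloring and the $T$ coloring are restrictions of the unique $p$-strip-coloring satisfying $2b\equiv a+c$, and uniqueness lets me compare them — and that inserting $2^q - 1$ new interior regions (whose count is $\equiv 0 \bmod p$) does not disturb the periodicity. An alternative, possibly cleaner route avoiding the induction altogether: argue directly that by Lemma \ref{q_length} every leaf $i$ of $T$ has $\|i\| \equiv 0 \pmod q$, hence $\rho(i) = \sum_{j} a_j/2^j$ with the exponents grouped in blocks of $q$; since $2^q \equiv 1 \pmod p$, we get $\rho(i) \equiv \sum_{\text{blocks } s} (\text{value of } s\text{-th block}) \pmod p$, and as $i$ runs left to right the word $a_1\cdots a_{\|i\|}$ increases by one ``at the finest level'' each step... but this needs the fact that $T$ being a tree means consecutive leaves' words are lexicographic successors only after suitable truncation, so the bifurcation-relation/induction approach is safer and I would present that one. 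I would keep the write-up to the induction on block count, cite Lemma \ref{q_length} for the decomposition, Proposition \ref{dyadic_relation} and the caret-insertion remark for the local relation, and Proposition 2.5 / direct computation for the base case.
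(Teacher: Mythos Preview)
Your overall strategy---induction on the number of $T_q$-blocks in the decomposition from Lemma \ref{q_length}---is exactly what the paper does, and your argument is correct. The difference is in execution: the paper bypasses the bifurcation relation entirely and just computes $\rho$ directly. For the base case, leaf $j$ of $T_q$ has word equal to the length-$q$ binary expansion of $j$, so $\rho(j) = j/2^q \equiv j \pmod p$; for the inductive step, attaching $T_q$ at a leaf with word $w$ and $\rho(w)\equiv r$ produces new leaves $w b_1\cdots b_q$ with $\rho = \rho(w) + j/2^{\|w\|+q} \equiv r + j \pmod p$ for $j=0,\ldots,2^q-1$, and the next leaf of the old tree already had $\rho \equiv r+1$ by the inductive hypothesis. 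That is the whole proof.

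Your route through Proposition \ref{dyadic_relation}, uniqueness of the $p$-strip-coloring, and the affine interpolation $x\mapsto a+(c-a)x$ reaches the same conclusion but is considerably more work than necessary; the direct $\rho$-computation makes the ``seamless extension'' issue you flagged disappear, since you never need to invoke uniqueness or compare two colorings. Also, drop the appeal to Proposition 2.5: that statement is specific to $p=3$ (it computes $\omega$, not $\rho$ mod $p$), and your parenthetical ``or directly'' is in fact the only valid option here.
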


\begin{proof}
The dyadic rational of a leaf $i\ (0 \leq i \leq 2^q-1)$ of $T_q$ is $i / 2^q$.
Therefore, by the condition $2^q \equiv 1 \pmod p$, the dyadic rationals of leaves of $T_q$ modulo $p$ are of the form
\begin{align*}
0, 1, 2, \ldots, p-1, \ldots, 0, 1, 2, \ldots, p-1, 0
\end{align*}
from left to right.

Note that any binary tree satisfying the assumption in Lemma \ref{q_length} can be obtained by connecting some $T_q$'s.
We suppose that such a binary tree $T$ satisfies the result of Lemma \ref{q_dyadic}.
Let $T'$ be a binary tree obtained from $T$ by attaching the tree $T_q$ to a leaf $i$ of $T$.
If the dyadic rational of $i$ of $T$ modulo $p$ is $r$, then that of the leaf $i$ of $T'$ is the same.
As above, the dyadic rationals from $i$ to $i+2^q-1$ of $T'$ are of the form
\begin{align*}
r, r+1, r+2, \ldots, r-1, \ldots, r, r+1, r+2, \ldots, r-1, r.
\end{align*}
By the assumption, the dyadic rational of the leaf $i+2^q$ of $T'$, that is, the leaf $i+1$ of $T$ modulo $p$ is $r+1$.
Hence the binary tree $T'$ also satisfies the desired result.
\end{proof}

\begin{proof}[Proof of Proposition $\ref{q_condition}$]
Let $(T_+, T_-)$ be a tree diagram with $\| i_+ \| \equiv \| i_- \| \pmod q$ for any leaf.
By adding carets, we are able to obtain a tree diagram $(T'_+, T'_-)$ equivalent to $(T_+, T_-)$ satisfying the assumption in Lemma $\ref{q_length}$.
From Lemma \ref{q_dyadic}, the dyadic rationals of leaves of $T'_+$ and $T'_-$ modulo $p$ are both of the form
\begin{align*}
0, 1, 2, \ldots, p-1, \ldots, 0, 1, 2, \ldots, p-1, 0
\end{align*}
from left to right.
Therefore $(T'_+, T'_-) \sim (T_+, T_-)$ is in $\cF_p$.
\end{proof}

From Lemma \ref{q_length}, each element of $\cF_p$ has a tree diagram obtained by attaching some $T_q$'s.
Recalling the definition of the embedding $\phi_q \colon F(2^q) \to F$, we see that replacing all $2^q$-carets of an element of $F(2^q)$ by $T_q$ gives an element of $\cF_p$.
Therefore, we obtain the following theorem:

\begin{theorem}\label{Theorem_BT_cFp}
The $p$-colorable subgroup $\cF_p$ is isomorphic to the Brown--Thompson group $F(2^q)$.
The isomorphism map is given by $\phi_q$. 
\end{theorem}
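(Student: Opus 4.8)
The plan is to show that the map $\phi_q \colon F(2^q) \to F$ restricts to an isomorphism onto $\cF_p$. I already know from Section \ref{Brown--Thompson} that $\phi_q$ is an injective group homomorphism (it is the standard embedding of a Brown--Thompson group into $F$, replacing each $2^q$-caret by the fixed binary tree $T_q$). So the only thing left is to identify its image with $\cF_p$, which amounts to proving two inclusions: $\phi_q(F(2^q)) \subseteq \cF_p$ and $\cF_p \subseteq \phi_q(F(2^q))$.

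For the inclusion $\phi_q(F(2^q)) \subseteq \cF_p$, I would take an arbitrary element of $F(2^q)$, represent it by a pair of $2^q$-ary trees $(S_+, S_-)$ of the same size, and observe that every leaf of such a $2^q$-ary tree, viewed after applying $\phi_q$, has binary-word length a multiple of $q$ — indeed a leaf at $2^q$-ary depth $d$ becomes a binary leaf of length $dq$. Since $(S_+, S_-)$ have the same size and the same underlying combinatorial leaf structure at the $2^q$-ary level, corresponding leaves $i_+$ and $i_-$ of $\phi_q(S_+)$ and $\phi_q(S_-)$ satisfy $\|i_+\| = d_i q = \|i_-\|$ for the same $d_i$, so in particular $\|i_+\| \equiv \|i_-\| \pmod q$. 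By Proposition \ref{q_condition} this means $\phi_q(S_+, S_-) \in \cF_p$.

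For the reverse inclusion $\cF_p \subseteq \phi_q(F(2^q))$, I would start from a reduced tree diagram $(T_+, T_-) \in \cF_p$ and use Proposition \ref{q_condition}: every leaf satisfies $\|i_+\| \equiv \|i_-\| \pmod q$. By inserting carets appropriately I can pass to an equivalent tree diagram $(T'_+, T'_-)$ in which every leaf of $T'_+$ and every leaf of $T'_-$ has length an exact multiple of $q$ (pad each leaf up to the common value of $\|i_+\|$ and $\|i_-\|$ rounded up to a multiple of $q$, inserting the same number of carets on both sides so that equivalence and the $\cF_p$-membership are preserved). Now Lemma \ref{q_length} applies to each of $T'_+$ and $T'_-$ separately: each is built by attaching copies of $T_q$. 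Collapsing every such copy of $T_q$ back to a single $2^q$-caret turns $(T'_+, T'_-)$ into a pair of $2^q$-ary trees of the same size, i.e.\ an element $g \in F(2^q)$ with $\phi_q(g) = (T'_+, T'_-) \sim (T_+, T_-)$. Hence $(T_+, T_-) \in \phi_q(F(2^q))$.

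The main obstacle I anticipate is the caret-padding step in the second inclusion: I must be careful that the carets I insert to make all leaf-lengths multiples of $q$ can be chosen consistently on the $+$ and $-$ sides so that the result is still a legitimate (equivalent) tree diagram and still lies in $\cF_p$, and that after collapsing the $T_q$'s the two sides really have equal size. This is where I would lean on the precise statement of Proposition \ref{q_condition} (equal leaf-lengths mod $q$) together with Lemma \ref{q_length}; once the padded diagram satisfies the hypothesis of Lemma \ref{q_length} on both sides, the decomposition into $T_q$'s is automatic and the rest is bookkeeping. I would also note in passing that $\phi_q$ is a homomorphism because attaching/collapsing $T_q$'s is compatible with the tree-diagram product (inserting $2^q$-carets corresponds to inserting copies of $T_q$), so the bijection $F(2^q) \to \cF_p$ it induces is a group isomorphism.
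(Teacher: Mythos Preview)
Your approach is essentially the paper's: use Proposition~\ref{q_condition} and Lemma~\ref{q_length} to show that $\phi_q$ is a bijection onto $\cF_p$. The padding step you worry about is exactly what the paper does (implicitly, in the proof of Proposition~\ref{q_condition}) and causes no trouble.

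There is one genuine slip in your first inclusion. You claim that for $(S_+,S_-)\in F(2^q)$ the corresponding leaves satisfy $\|i_+\| = d_i q = \|i_-\|$ ``for the same $d_i$''. This is false: two $2^q$-ary trees of the same size need not have the $i$-th leaf at the same depth (think of the two trees in the generator $x_0$ of $F(2^q)$). What \emph{is} true, and is all you need, is that every leaf of $\phi_q(S_\pm)$ has binary length a multiple of $q$, so $\|i_+\|\equiv 0\equiv \|i_-\|\pmod q$, and Proposition~\ref{q_condition} applies. Fix that sentence and the argument goes through.
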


In particular, for any integer $n \geq 2$, the Brown--Thompson group $F(2^n)$ is isomorphic to the $(2^n-1)$-colorable subgroup $\cF_{2^n-1}$.

\begin{corollary} \label{2n-ary}
Let $n \geq 2$ be an integer.
Then all knots and links obtained from non-trivial elements of the subgroup $\phi_n(F(2^n))$ of $F$ are $(2^n-1)$-colorable.
\end{corollary}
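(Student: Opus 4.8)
The plan is to deduce Corollary \ref{2n-ary} directly from the results already assembled, with essentially no new work beyond unwinding the chain of identifications. First I would observe that for $p = 2^n - 1$ the smallest positive integer $q$ with $2^q \equiv 1 \pmod p$ is exactly $q = n$: indeed $2^n \equiv 1 \pmod{2^n-1}$ gives $q \mid n$, and for any proper divisor $d$ of $n$ we have $2^d - 1 < 2^n - 1$, so $2^d \not\equiv 1 \pmod{2^n-1}$. Hence the $q$ attached to the odd integer $p = 2^n-1$ by Proposition \ref{q_condition} is precisely $n$, and Theorem \ref{Theorem_BT_cFp} specializes to say that $\cF_{2^n-1} = \cF_p$ is isomorphic to $F(2^q) = F(2^n)$ via $\phi_n$. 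In particular the subgroup $\phi_n(F(2^n))$ of $F$ is exactly the subgroup $\cF_{2^n-1}$.

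Next I would feed this identification into Theorem \ref{p-col_knot}. That theorem asserts that for the odd integer $p \geq 3$, every knot or link obtained (via Jones' construction, i.e.\ the map $\cL$) from a non-trivial element of $\cF_p$ is $p$-colorable. Applying it with $p = 2^n-1$ — which is indeed an odd integer $\geq 3$ for every $n \geq 2$ — we conclude that every knot or link obtained from a non-trivial element of $\cF_{2^n-1}$ is $(2^n-1)$-colorable. Since $\phi_n(F(2^n)) = \cF_{2^n-1}$ as subgroups of $F$ by the previous paragraph, and since $\phi_n$ is an isomorphism (so it carries non-trivial elements to non-trivial elements), this is exactly the assertion of the corollary: all knots and links obtained from non-trivial elements of $\phi_n(F(2^n))$ are $(2^n-1)$-colorable.

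There is no real obstacle here; the only point that deserves a sentence of care is the computation that $q = n$ for $p = 2^n - 1$, since the statement of Theorem \ref{Theorem_BT_cFp} is phrased in terms of $q$ rather than $n$, and the corollary is only literally a special case once one knows $2^n$ is the relevant Brown--Thompson parameter. I would also remark, for the reader, that the displayed sentence preceding the corollary (``for any integer $n \geq 2$, $F(2^n) \cong \cF_{2^n-1}$'') is precisely this specialization, so the corollary is a one-line consequence of that remark together with Theorem \ref{p-col_knot}. Thus the write-up will consist of: (1) the divisor argument identifying $q$ with $n$; (2) citing Theorem \ref{Theorem_BT_cFp} to get $\phi_n(F(2^n)) = \cF_{2^n-1}$; (3) citing Theorem \ref{p-col_knot} with $p = 2^n-1$ to finish.
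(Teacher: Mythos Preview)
Your proposal is correct and matches the paper's own reasoning: the paper does not give a separate proof of Corollary~\ref{2n-ary}, but presents it as an immediate consequence of the sentence preceding it (``for any integer $n\ge 2$, $F(2^n)\cong\cF_{2^n-1}$'') together with Theorem~\ref{p-col_knot}. Your only addition is the short divisor argument verifying $q=n$ when $p=2^n-1$, which the paper leaves implicit; this is a helpful clarification and fully consistent with the intended proof.
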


\begin{example}
The 3-colorable subgroup $\cF_3$ is isomorphic to $F(4)$, which is already shown by Ren \cite[Theorem 1.3]{ren2018skein}.
Also the subgroups $\cF_5$ and $\cF_7$ are isomorphic to $F(16)$ and $F(8)$, respectively.
Moreover, the 15-colorable subgroup $\cF_{15}$ is also isomorphic to $F(16)$, and thus non-trivial elements of $\cF_5 (= \cF_{15})$ produce 15-colorable links.
\end{example}

From Corollary \ref{2n-ary}, there exists a natural question about the properties of knots and links and the $p$-colorable subgroup $\cF_p$, which is a generalization of \cite[Question 3.5]{kodama20223}:

\begin{question}
Is the map
\begin{align*}
\cL \colon \cF_{2^n-1} (= \phi_n (F(2^n))) \setminus \{ 1 \} \to \left\{ \textrm{all $(2^n-1)$-colorable knots and links} \right\}
\end{align*}defined in Remark \ref{mapL} surjective?
\end{question}

\subsection{The $p$-colorable subgroup as a stabilizer subgroup}
In this section, we generalize the argument in \cite[Section 2]{aiello2021maximal} for $\cF_p$. 
Although some arguments are slightly modified, it yields the same result in \cite{aiello2021maximal} when $p=3$. 

Let $q$ be a positive integer defined in Section \ref{subsec_isomorphism}. 
The set $W_2 \setminus \{0^i \mid i \geq 0\}$ is denoted by $W_2^-$. 
We first define an equivalence relation $\approx$ on $W_2^-$ as the one generated by $v \approx v \qz$ where $v$ is in $W_2^-$. 
Set $W^q \coloneqq W_2^- / {\approx}$. 
For $u \in W_2^-$, we write its equivalence class as $[u]$. 
We define a right group action of $F$ on $W^q$ by rewriting subwords by two binary words corresponding to leaves. 
We note that this group action induces the natural action of $F$ on $[0, 1]$. 
\begin{example}
The action of $x_0^{-1} \in F$ on $[01]$ and $[1]=[1\qz]$ are $[001]$ and $[01 \underbrace{0\cdots0}_{q-1}]$, respectively. 
\end{example}

Recall that for $u \in W_2$, $\|u\|$ denotes its length as a binary word. 
Also, recall the map $\rho$, which is defined by $u=a_1 \cdots a_m \mapsto  \Sigma_{i=1}^m {{a_i}/{2^i}}$. 
We consider the following sets: 
\begin{align*}
S_i^q \coloneqq \{[u] \in W^q \mid \|[u]\| \in q\mathbb{N}, \rho([u]) \equiv i \pmod p \}
\end{align*}
where $i \in \mathbb{Z}/ p\mathbb{Z}$.
Note that the above two conditions are well-defined. 
\begin{remark}
The map 
\begin{align*}
\rho: \coprod_i S_i^q \to \mathbb{Z}[1/2] \cap (0, 1)
\end{align*}
is well-defined and bijective. 
\end{remark}
We start with an investigation of the stabilizers. 
\begin{lemma}
For each $i \in \mathbb{Z}/p\mathbb{Z}$, 
\begin{align*}
\Stab(S_i^q) \subset \Stab(S_{i-1}^q)
\end{align*}
holds. 
\begin{proof}
Let $f$ be an element of $\Stab(S_i^q)$. 
We show that if $[u]$ is in $S_{i-1}^q$ then $[u]\cdot f$ is also in $S_{i-1}^q$. 
Let $u$ be a representative of $[u]$ such that $u \cdot f$ is defined and we write $u \cdot f$ as $a_1 \cdots a_m$. 
Since $\|u\| \in q\mathbb{N}$, we have 
\begin{align*}
\rho(u \underbrace{0\cdots0}_{q-1}1)\equiv (i-1)+1=i \pmod p. 
\end{align*}
This implies that 
\begin{align*}
(u \underbrace{0\cdots0}_{q-1}1)\cdot f=a_1 \cdots a_m \underbrace{0\cdots0}_{q-1}1
\end{align*}
is in $S_i^q$. 
Since $\|a_1 \cdots a_m \underbrace{0\cdots0}_{q-1}1\|$ is in $q\mathbb{N}$, $\|a_1 \cdots a_m\|$ is also in $q\mathbb{N}$. 
Also, since 
\begin{align*}
\rho(a_1 \cdots a_m \underbrace{0\cdots0}_{q-1}1)\equiv i
\end{align*}
holds, we have $\rho(a_1 \cdots a_m) \equiv i-1$. 
\end{proof}
\end{lemma}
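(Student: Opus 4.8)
The plan is to prove this inclusion by a \emph{suffix-shift} trick: the residue class $S_{i-1}^q$ can be detected from the already-controlled class $S_i^q$ by appending a short fixed word that raises the $\rho$-value by exactly $1$ modulo $p$. I would fix $f \in \Stab(S_i^q)$ and $[u] \in S_{i-1}^q$, and choose a representative $u \in W_2^-$ long enough that $u$ is a leaf of a tree diagram of $f$, so that $u \cdot f$ is obtained by rewriting the prefix $u$; write $u \cdot f = a_1 \cdots a_m$. First I would observe that, since $\|u\| \in q\mathbb{N}$ and $2^q \equiv 1 \pmod p$, we have $2^{\|u\|+q} \equiv 1 \pmod p$, so the word $u\underbrace{0\cdots0}_{q-1}1$ satisfies $\rho(u\underbrace{0\cdots0}_{q-1}1) = \rho(u) + 2^{-(\|u\|+q)} \equiv (i-1)+1 = i \pmod p$, while its length stays in $q\mathbb{N}$; hence $[u\underbrace{0\cdots0}_{q-1}1] \in S_i^q$.

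Next I would use that the appended block $\underbrace{0\cdots0}_{q-1}1$ is left verbatim by the action of $f$, so $(u\underbrace{0\cdots0}_{q-1}1)\cdot f = a_1\cdots a_m\underbrace{0\cdots0}_{q-1}1$, and this class lies in $S_i^q$ because $f \in \Stab(S_i^q)$. Reading off the two defining conditions of $S_i^q$: the length lies in $q\mathbb{N}$, so $m = \|a_1\cdots a_m\| \in q\mathbb{N}$; and $\rho(a_1\cdots a_m) + 2^{-(m+q)} \equiv i \pmod p$, so using $2^{m+q}\equiv 1 \pmod p$ once more, $\rho(a_1\cdots a_m) \equiv i-1 \pmod p$. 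Therefore $[u]\cdot f = [a_1\cdots a_m] \in S_{i-1}^q$, which gives $S_{i-1}^q \cdot f \subseteq S_{i-1}^q$. Finally, applying the same argument to $f^{-1} \in \Stab(S_i^q)$ gives the reverse containment $S_{i-1}^q \subseteq S_{i-1}^q \cdot f$, so $f \in \Stab(S_{i-1}^q)$.

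I expect the only genuinely delicate point --- and the one worth spelling out --- to be the choice of representative $u$: it must be long enough that the $F$-action literally rewrites the prefix $u$ into a single leaf $a_1\cdots a_m$ of the range tree and leaves the suffix $\underbrace{0\cdots0}_{q-1}1$ untouched. This is exactly where one uses that $\rho$ and the residue of $\|\cdot\|$ modulo $q$ are $\approx$-invariant (so $S_{i-1}^q$ and $S_i^q$ are well-defined on $W^q$) and that each class in $W^q$ has arbitrarily long representatives. Everything else is elementary modular arithmetic driven by $2^q \equiv 1 \pmod p$; the conceptual content is simply that padding a word of length in $q\mathbb{N}$ by $0^{q-1}1$ adds $1$ to $\rho$ modulo $p$.
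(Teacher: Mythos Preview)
Your proposal is correct and follows essentially the same route as the paper's own proof: append the suffix $0^{q-1}1$ to push $[u]$ from $S_{i-1}^q$ into $S_i^q$, apply $f\in\Stab(S_i^q)$, and then strip the suffix to read off the two defining conditions for $S_{i-1}^q$. Your write-up is in fact slightly more explicit than the paper's in two places---you spell out the modular computation $\rho(u\,0^{q-1}1)=\rho(u)+2^{-(\|u\|+q)}$ and you close the setwise-stabilizer argument by applying the same reasoning to $f^{-1}$---but these are expository refinements rather than a different approach.
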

This lemma implies that $\Stab(S_i^q)=\Stab(S_j^q)$ holds for any $i, j \in \mathbb{Z}/p\mathbb{Z}$. 
We are now ready for the proof of the main theorem in this section. 
\begin{theorem}
The $p$-colorable subgroup $\cF_p$ coincides with $\Stab(S_i)$ for all $i \in \mathbb{Z}/p\mathbb{Z}$. 
\begin{proof}
For the inclusion $\cF_p \subset \Stab(S_i^q)$, we only need to check the generators (and their inverse) of $\cF_p$. 
By Theorem \ref{Theorem_BT_cFp}, $\cF_p$ is generated by $\phi_q(x_0), \dots,  \phi_q(x_{2^q-1})$, where each $x_k$ is in the Brown--Thompson group $F(2^q)$. 
Then it is clear from the definition of the binary tree $T_q$. 

Let $f=(T_+, T_-) \in \Stab(S_i)=\bigcup_{i}{\Stab(S_i)}$. 
According to Definition \ref{Definition_cFp}, we show that $\rho(l_+) \equiv \rho(l_-) \pmod p$ holds, where $l_+$ (resp.~$l_-$) is the binary word corresponding to the $l$-th leaf of $T_+$ (resp.~$T_-$). 
If $l_+=\underbrace{0\cdots0}_{j}$ for some $j \geq 1$, then it is clear that $\rho(l_+)=\rho(l_-)=0$ holds. 
If $l_+$ is otherwise, we consider the binary word $u\coloneqq l_+\underbrace{0\cdots0}_{j}$ with some $j\geq 0$ such that $[u]$ is in $W^q$. 
Then since $f \in \bigcup_{i}{\Stab(S_i)}$, we have
\begin{align*}
\rho(l_+)=\rho(l_+\underbrace{0\cdots0}_{j}) \equiv \rho(l_- \underbrace{0\cdots0}_{j})=\rho(l_-). 
\end{align*}
This completes the proof. 
\end{proof}
\end{theorem}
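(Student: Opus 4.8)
The plan is to prove the theorem by establishing the two inclusions $\cF_p \subseteq \Stab(S_i^q)$ and $\Stab(S_i^q) \subseteq \cF_p$ separately, exploiting the preceding lemma, which tells us that all the subgroups $\Stab(S_i^q)$, $i \in \Z{p}$, coincide; I will write $\Stab$ for this common subgroup and switch the index $i$ at will.

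For the inclusion $\cF_p \subseteq \Stab$ I would invoke Theorem \ref{Theorem_BT_cFp}, by which $\cF_p = \phi_q(F(2^q))$ is generated by $\phi_q(x_0), \dots, \phi_q(x_{2^q-1})$, the images of the standard generators of the Brown--Thompson group $F(2^q)$. Since $\Stab$ is a subgroup it suffices to check that each $\phi_q(x_k)^{\pm1}$ preserves $S_i^q$. Such an element acts by rewriting a subword labeling a leaf of one copy of the binary tree $T_q$ by the subword labeling a leaf of another copy of $T_q$. Because every leaf of $T_q$ has length exactly $q$, this operation alters the length of any word by a multiple of $q$, hence preserves the condition $\|[u]\| \in q\mathbb{N}$; and because the dyadic rationals of the leaves of $T_q$ run through $0, 1, 2, \dots, p-1, 0, 1, \dots$ modulo $p$ (using $2^q \equiv 1 \pmod p$, cf.\ Lemma \ref{q_dyadic}), the rewriting preserves $\rho(\cdot) \bmod p$. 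Thus $S_i^q$ is stabilized.

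For the converse $\Stab \subseteq \cF_p$, I would take $f = (T_+, T_-) \in \Stab$ and an arbitrary leaf index $l$, with corresponding binary words $l_+$ in $T_+$ and $l_-$ in $T_-$, and verify the condition of Definition \ref{Definition_cFp}, namely $\rho(l_+) \equiv \rho(l_-) \pmod p$. If $l_+ = \underbrace{0 \cdots 0}_{j}$ for some $j \geq 1$ then the same holds for $l_-$ and both sides vanish, so assume $l_+ \in W_2^-$. I then pad $l_+$ on the right with zeros, setting $u := l_+ \underbrace{0 \cdots 0}_{j}$ with $j \geq 0$ chosen so that $[u] \in W^q$ (that is, $\|u\| \in q\mathbb{N}$), and note $[u] \in S_r^q$ for $r := \rho(l_+) \bmod p$. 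Applying $f \in \Stab = \Stab(S_r^q)$ and using that the action of $f$ rewrites the leaf $l_+$ to $l_-$ while leaving the trailing zeros untouched, the class $[\, l_- \underbrace{0 \cdots 0}_{j}\,]$ lies in $S_r^q$, whence $\rho(l_-) \equiv r = \rho(l_+) \pmod p$. Since $l$ was arbitrary, $f \in \cF_p$.

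The only genuinely delicate point, and hence the step I expect to be the main obstacle, is the padding maneuver in the converse inclusion: one must check that for $l_+ \in W_2^-$ there is a padding length $j$ making $l_+ \underbrace{0 \cdots 0}_{j}$ represent an element of $W^q$ on which $f$ acts precisely by the leaf-rewriting rule, reading $l_+$ as a genuine leaf of $T_+$ and appending the trailing zeros unchanged, and that this is compatible with the well-definedness of $\|[u]\| \bmod q$ and $\rho([u]) \bmod p$ on $W^q$ asserted just before the lemma. This rests on the analysis of $\cF_p$ in Section \ref{subsec_isomorphism} (corresponding leaves of a tree diagram of $\cF_p$ have equal length mod $q$, so one common padding works on both sides of the diagram) together with the structure of the equivalence relation $\approx$. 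Once this bookkeeping is settled, the two congruence computations are immediate.
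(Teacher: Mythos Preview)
Your proof is correct and follows the same route as the paper: reduce the forward inclusion to the generators $\phi_q(x_k)$ via Theorem~\ref{Theorem_BT_cFp}, and for the reverse inclusion pad a leaf word $l_+$ with zeros to land in some $S_r^q$, apply $f$, and read off the congruence for $\rho(l_-)$.

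One remark on your final paragraph: the ``delicate point'' you flag is not actually an obstacle, and your suggested way around it would be circular. You write that the padding maneuver ``rests on the analysis of $\cF_p$ in Section~\ref{subsec_isomorphism} (corresponding leaves of a tree diagram of $\cF_p$ have equal length mod $q$)'', but at this stage $f$ is only assumed to lie in $\Stab$, not in $\cF_p$, so you cannot invoke that property. Fortunately you do not need it. The action of $f=(T_+,T_-)$ on $u=l_+\underbrace{0\cdots0}_{j}$ is \emph{defined} by rewriting the leaf prefix $l_+$ as $l_-$ and leaving the suffix intact, so $u\cdot f = l_-\underbrace{0\cdots0}_{j}$ holds automatically; the hypothesis $f\in\Stab(S_r^q)$ then yields $[\,l_-\underbrace{0\cdots0}_{j}\,]\in S_r^q$, from which both congruences (on $\rho$ and, incidentally, on the length) follow. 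No prior control on $\|l_-\|$ is required.
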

\begin{remark}
Due to the work of Golan \cite[Corollary 5.7]{golan2016generation}, the $p$-colorable subgroup $\cF_p$ is closed. 
\end{remark}
\section*{Acknowledgements}
We are grateful to Professor Motoko Kato for her suggestion. 
We would like to thank Professor Tomohiro Fukaya who is the first author's supervisor for his helpful comments. 
We also wish to thank Professor Takuya Sakasai who is the second author's supervisor for his helpful comments.

\bibliographystyle{plain}
\bibliography{bib1} 
\bigskip
\address{
DEPARTMENT OF MATHEMATICAL SCIENCES,
TOKYO METROPOLITAN UNIVERSITY,
MINAMI-OSAWA HACHIOJI, TOKYO, 192-0397, JAPAN
}

\textit{E-mail address}: \href{mailto:kodama-yuya@ed.tmu.ac.jp}{\texttt{kodama-yuya@ed.tmu.ac.jp}}

\address{GRADUATE SCHOOL OF MATHEMATICAL SCIENCES, THE
UNIVERSITY OF TOKYO, 3-8-1 KOMABA, MEGURO-KU, TOKYO, 153-8914,
JAPAN}

\textit{E-mail address}: \href{mailto:takano@ms.u-tokyo.ac.jp}{\texttt{takano@ms.u-tokyo.ac.jp}}
\end{document}